\newcommand{\Z}{\mathbb{Z}}
\newcommand{\E}{\mathbf{E}}
\DeclareMathOperator{\var}{var}
\renewcommand{\P}{\mathbf{P}}
\newcommand{\f}{\frac}
\newcommand{\p}{\mu}
\newcommand{\lrl}{\longleftrightarrow}
\newcommand{\ra}{\rightarrow}
\newcommand{\la}{\leftarrow}
\renewcommand{\phi}{\varphi}
\newcommand{\cev}[1]{\reflectbox{\ensuremath{\vec{\reflectbox{\ensuremath{#1}}}}}}
\renewcommand{\b}{\bullet}
\renewcommand{\L}{\cev{\b}}
\newcommand{\R}{\vec{\b}}
\newcommand{\B}{\dot{\b}}
\newtheorem{theorem}{Theorem}
\newtheorem{lemma}[theorem]{Lemma}
\newtheorem{proposition}[theorem]{Proposition}
\newtheorem{corollary}[theorem]{Corollary}
\newtheorem{fact}[theorem]{Fact}
\newtheorem{step}{Step}
\theoremstyle{definition}
\title{Non-universality in clustered ballistic annihilation}
\author[Junge]{Matthew Junge} \email{matthew.junge@baruch.cuny.edu}
\author[Ortiz San Miguel]{Arturo Ortiz San Miguel}
\author[Reeves]{Lily Reeves}
\author[Rivera S\'anchez]{Cynthia Rivera S\'anchez}
\thanks{Junge, Ortiz San Miguel, and Rivera S\'anchez were partially supported by NSF grant DMS-1855516. Part of this research was completed during the 2022 Baruch College Discrete Math REU partially supported by NSF grant DMS-2051026. We are grateful to L.P.\ Arguin for helpful comments}
\begin{document}

\maketitle

\begin{abstract}
In ballistic annihilation, infinitely many particles with randomly assigned velocities move across the real line and mutually annihilate upon contact. We introduce a variant with superimposed clusters of multiple stationary particles. Our main finding is that the critical initial cluster density to ensure species survival depends on both the mean and variance of the cluster size. Our result contrasts with recent ballistic annihilation universality findings with respect to particle spacings. A corollary of our theorem resolves an open question for coalescing ballistic annihilation.
\end{abstract}

\section{Introduction}

\emph{Ballistic annihilation} (BA) is a stochastic spatial system in which particles are placed throughout the real line 
with independent and identically distributed spacings
and proceed to move at independently sampled velocities. Collisions result in mutual annihilation. Interest in annihilating dynamics with ballistic particle trajectories arose as an extremal case of diffusion-limited annihilating systems being studied by physicists and mathematicians in the late 20th century \cite{toussaint1983particle, BL3, bramson1991asymptotic}. 
%A precursor to BA were \emph{diffusion-limited annihilating systems} (DLAS) in which particles follow random walk or Brownian trajectories \cite{bramson1991asymptotic}. 
%Interest arose in BA, since ballistic motion appeared to markedly change the global behavior.

Two-velocity BA, with velocities sampled from $\pm 1$, was introduced by physicists Elskens and Frisch \cite{elskens1985annihilation}. Although species survival regimes are simple to infer, the global behavior of ``flocks" of like-particles is subtle and was only recently worked out in full detail \cite{belitsky1995ballistic, kovchegov2020dynamical}. 
%On the other extreme,  BA with velocities sampled from a continuous distribution. There are some conjectures based on mean-field heuristics and simulations \cite{ben1993decay, trizac2002kinetics, krapivsky2001ballistic}, and a result describing the law for the number of surviving particles in finite systems for the related \emph{bullet process} \cite{bullets2}. Otherwise, not much is known.  
Systems that include a third velocity have proven to be a mathematically rich and challenging step up in complexity. %from two-velocity BA.

The three-velocity setting was introduced by Sheu et.\  al in \cite{sheu1991coagulation}, but with modified collision rules to make it resemble the two velocity setting. Subsequently, Droz et.\ al in \cite{droz95} analyzed the symmetric three-velocity setting with velocities sampled from $-1,0,1$. Velocity 0 particles, which we will refer to as \emph{blockades}, occur with probability $p$. Velocity $+1$ and $-1$ particles, which we will call \emph{right} and \emph{left arrows}, respectively, each occur with probability $(1-p)/2$. 

Let $\theta(p)$ be the probability that a given blockade is never annihilated. By ergodicity, the limiting proportion of surviving blockades converges to $p \theta(p)$. 
So,
\begin{align}
    p_c = \inf \{p \colon \theta(p) > 0\} \label{eq:pc}
\end{align} 
represents the critical initial blockade density for species survival.

%Note that symmetry and ergodicity ensure that all arrows are eventually annihilated almost surely.
%Unlike the two velocity setting, t

Unlike the two-velocity setting, three-velocity BA has multiple collision types: arrow--blockade and arrow--arrow. The rates at which these occur are not obvious, and thus neither is the value of $p_c$. Another challenge is that BA exhibits long-range dependence. This makes it difficult to extrapolate from finite systems and to account for multiple velocities. BA is also sensitive to perturbation. 
%changing the velocity of a single particle velocity can have a cascading effect on collision pairings. 
Changing an arrow to a blockade may increase the lifespans of other arrows. Thus, it is not obvious how to rigorously confirm the intuition that $\theta$ and related quantities are monotone in $p$. This is problematic. For example, it is not a priori obvious that the definition of $p_c$ at \eqref{eq:pc} is equal to $\sup \{ p \colon \theta(p) = 0\}$.

Droz et.\ al \cite{droz95} and, later in more detail, Krapivsky et.\ al \cite{ krapivsky95} worked out the phase-behavior of three-velocity BA and concluded that $p_c=1/4$. However, the derivations were not completely rigorous. Despite some progress towards upper bounds on $p_c$ \cite{bullets, ST, burdinski}, showing that $p_c=1/4$ remained an open problem. Even proving the much weaker statement that $p_c>0$ was a problem widely advertised by Sidoravicius in the mid 2010s.
A breakthrough from Haslegrave, Sidoravicius, and Tournier introduced an exactly solvable approach that proved that $p_c=1/4$ \cite{HST}. In the same work, the authors also worked out finer details such as tail survival probabilities and the ``skyline" of collision types. 

Many of the findings in \cite{HST} are \emph{universal} in the sense that the results hold for any law of particle spacings. For example, $p_c=1/4$ so long as triple collisions almost surely do not occur \cite{burdinski, HST}. Additional universality properties with respect to particle spacings were observed in the  followup work by Haslegrave and Tournier \cite{HT}. Broutin and Marckert discovered that a closely related bullet process with finitely many particles has a universal law governing the number of surviving particles that does not depend on  velocity or spacing laws \cite{bullets2}.

A canonical form of universality is invariance with respect to the average particle density. %The ballistic annihilation literature has exclusively focused on configurations with isolated, single particles. 
It is physically and mathematically natural to allow for clusters of superimposed particles, as is standard in other diffusion-limited annihilating systems \cite{bramson1991asymptotic}. To test the robustness of BA dynamics to the initial particle density, we introduce a variant of BA with random clusters of multiple blockades. We prove that the analogue of the critical value \eqref{eq:pc} depends on more than simply the average initial density of particles. Thus, three-velocity BA lacks this type of universality. To our knowledge, this is a new discovery that was not previously conjectured.

\subsection{Notation}

We let $(x_n)_{n \in \mathbb Z}$ be an ordered sequence of starting locations for particles. To standardize placements, set $x_0 = 0$ and assume that $x_n - x_{n-1}$ are sampled independently according to a continuous distribution with support contained in $(0,\infty)$. 
Let $X$ be a nonnegative integer-valued random variable with probability distribution $\p = (\mu_k)_{k \geq 0}$, and let $f(t) = \E [t^X]= \sum_{k=0}^\infty \mu_k t^k$ be the probability generating function. In an abuse of notation, we will write $\E[\mu]$ and $\var(\mu)$ for the mean and variance of $X$. Take $(X_n)_{n \in \mathbb Z}$ to be independent  and $\p$-distributed. Each site $x_n$ either independently starts with a \emph{cluster} of $X_n$-blockades with probability $p \in [0,1]$, or otherwise contains a single arrow whose velocity is sampled uniformly from $\pm1$. We will sometimes refer to the starting number of blockades in a cluster as the \emph{size} and write \emph{$k$-cluster} to refer to a cluster of size $k$. Blockades are stationary. Left and right arrows move with velocities $-1$ and $+1$, respectively. 

Define \emph{$\p$-clustered ballistic annihilation} to have the just-described starting configuration at time 0. As time evolves, particles move at their assigned velocities. When two arrows collide, both vanish from the system. When an arrow collides with a cluster containing $k\geq 1$ remaining blockades, the arrow vanishes and one blockade is removed from the cluster (so $k-1$ blockades remain). A more formal construction of BA that easily generalizes to include clusters can be found in \cite{HST}. 

We denote the events that a cluster starts at $x_n$ by $\B_n$, or that a left or right arrow starts at $x_n$ by $\L_n$ and $\R_n$, respectively. When $x_n$ contains a cluster, we denote the starting size with a superscript $\B_n^{X_n}$. We will frequently refer to $\B, \L, \R$ as particles. Accordingly, collision events and visits to a location $ u \in \mathbb R$ are specified by
\begin{align}
    \R_m \lrl \L_n &= \{\R_m \text{ and } \L_n \text{ mutually annihilate}\} \\
    \R_m \lrl \L &= \{\R_m \text{ mutually annihilates with an arrow}\}\\
    %\{\R_m \ra \B_n\} &= \{\R_m \text{ mutually annihilates with a blockade in the cluster } \B_n\} \\
    \B_n \la \L_m &= \{\L_m \text{ mutually annihilates with a blockade at $x_n$} \} \\
    \B_n^k \la \L_m &= \{\L_m \text{ mutually annihilates with a blockade at $x_n$, $X_n=k$} \}\\  
    %& \hspace{7.5 cm} \cap \{ X_n=k \}\\
    \B \la \L_n &= \{\L_n\text{ mutually annihilates with a blockade}\}\\
    u \la \L  &= \{u \text{ is visited by a $\L$}\}\\
    u \overset{\; j} \la \L_m &= \{\L_m \text{ is the $j$th $\L$ to arrive to $u$}\}.
\end{align}
The events $\R_m \ra \B_n$, $\R_m \ra \B$, $\R_m \ra \B^k$, $\R \ra u$, and $\R \overset{j}\ra u$ are defined similarly. We denote complements of collision events with $\not \lrl, \not \la,$ and $\not \ra$.  Note that when an arrow hits a cluster we count that as visiting the site, so $\{\B_k \la \L\}\subseteq {\{x_k \la \L\}}$. 

It is often helpful to restrict to a system which only includes particles started in an interval $I \subseteq \mathbb R$. We notate this restriction by including $I$ as a subscript on the event, for example, $(\B_m \la \L_n)_{[x_m, x_n]}$ is the event that $\b_m$ is a blockade that annihilates with a left arrow started at $x_n$ in the process restricted to only the particles in $[x_m,x_n]$.
Unless indicated otherwise, the default is that events are one-sided i.e., restricted to $(0,\infty)$. So, $\P( 0 \la \L) = \P( (0 \la \L)_{(0,\infty)}).$ 

We now define the generalization of $\theta$ from the previous section for $\p$-clustered BA:  
\begin{align}
\theta =\theta(p, \p) := \P( (\R \not \ra 0)_{(-\infty,0)}  \wedge ( 0 \not \leftarrow \L)_{(0,\infty)}).\label{eq:theta}    
\end{align}
It is convenient to instead work with the one-sided complement
\begin{align}
q=q(p, \p) := \P( 0 \leftarrow \L), \label{eq:q}
\end{align}
 so that $\theta = (1-q)^2.$ Define the critical value
\begin{align}
    p_c =p_c(\p) := \inf \{ p \colon \theta(p,\mu) > 0\} = \inf \{ p \colon q(p,\mu) < 1\}.
\end{align}

\subsection{Results}

Our main result is a simple formula for $p_c$ that depends on both the mean and variance of $\p$. We also provide an implicit formula for $q$.
\begin{theorem} \thlabel{thm:main}
For $\p$-clustered BA it holds that
\begin{align}
p_c &= \f 1 {(\E[\mu] + 1)^2 + \var(\mu) }. \label{eq:pcform}
\end{align}
 Moreover, $q$ is continuous, strictly decreasing on $[p_c,1]$, and solves
\begin{align}
    \frac{(1-q)^2}{\left(1-q^2\right) q^2 f'(q)-2 q f(q)+q^2+1} = p\label{eq:qform}
\end{align}
    with $f(q) = \sum_{k=0}^\infty \mu_k q^k$ the probability generating function of $\mu$.
\end{theorem}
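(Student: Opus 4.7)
The plan is to adapt the recursive framework of Haslegrave-Sidoravicius-Tournier \cite{HST} to the clustered setting, deriving \eqref{eq:qform} for $q = q(p,\p)$ by a one-sided analysis that conditions on the first particle at $x_1$, and separately identifying $p_c$ via a critical-density argument.

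First I would set up a recursion for $q$ by conditioning on the type of $x_1$. The case $x_1 = \L_1$ contributes the base term $(1-p)/2$, since $\L_1$ reaches $0$ through empty space. For $x_1 = \B_1^k$ the event $0 \la \L$ requires $k+1$ successive left arrivals at $x_1$ (the first $k$ destroying the cluster, the $(k+1)$-th passing through to $0$); the key input here is the clustered analogue of the HST one-sided arrival identity, namely that the number $N$ of left arrivals at a given site satisfies $\P(N \ge j) = q^j$. This step rests on a renewal argument showing that after each arrival the intervening region is cleared and the process beyond is stochastically fresh. Combined with this geometric-arrival lemma the $\B_1^k$ case contributes $pq^{k+1}$, and summing against $\p_k$ gives $pqf(q)$.

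The delicate case is $x_1 = \R_1$: the right arrow must annihilate, either with a left arrow from beyond (after which renewal past the annihilation point contributes a factor $q$), or by consuming one blockade from some cluster further out. The latter sub-case is where $f'(q)$ enters: summing over which cluster size $k$ is struck, with the size-biased weighting $k\p_k$ whose generating sum is $f'(q)$, and tracking the renewal factor $(1-q^2)q^2$ accumulated after the $\R\B$-collision, produces the extra contribution $p(1-q^2)q^2 f'(q)$. Assembling the three case contributions into a linear equation in $q$ with coefficients in $\RR[q,f(q),f'(q)]$, and simplifying, yields exactly $(1-q)^2 = pD(q)$ with $D(q)=(1-q^2)q^2 f'(q)-2qf(q)+q^2+1$.

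The critical value does not fall out of \eqref{eq:qform} directly, since $q=1$ solves that equation trivially for every $p$. To identify $p_c=1/((\E[\p]+1)^2+\var(\p))$ I would use a separate first-moment calculation for the expected number of left arrivals consumed per cluster site; size-biasing by the cluster-size distribution shows that the effective arrow demand per cluster is $\E[(\p+1)^2]$, and balancing this against the ambient arrow supply identifies exactly when $q<1$ is possible. Strict monotonicity and continuity of $q$ on $[p_c,1]$ then follow by implicit differentiation of \eqref{eq:qform}, combined with a monotone-coupling step to rule out the non-monotonicity warned about in the introduction.

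The hardest part is almost certainly the arrival lemma $\P(N \ge j)=q^j$ together with the rigorous identification of the $f'(q)$ contribution in the $\R_1$ case: the combinatorial bookkeeping when $\R_1$ strikes a cluster, and the fact that particles in $(x_1,x_n)$ are not simply cleared by $\R_1$'s passage, demand a careful renewal or coupling argument going beyond \cite{HST}. A secondary obstacle is establishing the $p_c$ identity without circular appeal to \eqref{eq:qform}; this likely requires a size-biased branching comparison or a mass-transport argument tailored to clusters.
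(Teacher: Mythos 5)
Your recursion is the same one the paper derives: conditioning on the type of $x_1$ gives the terms $(1-p)/2$ and $pqf(q)$, and the $\R_1$ case, once made rigorous, assembles into exactly $(1-q)^2 = p\,D(q)$ with $D(q)=(1-q^2)q^2f'(q)-2qf(q)+q^2+1$. The paper handles the $\R_1$ case with a mass transport principle plus the symmetry identity $\sum_{j=1}^{k}\P(\vec D_j < \cev D_{k+1-j})=k/2$ for the arrival distances $\vec D_j,\cev D_j$, which is precisely the ``careful renewal or coupling argument'' you flag as the hard part; so that portion of your outline is sound, if incomplete in its bookkeeping of the sub-case where $\R_1$ hits a cluster but $0$ is nonetheless not visited (the paper's quantity $r$, which also feeds into the final equation).

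The genuine gap is in how you extract $p_c$, continuity, and monotonicity. Your ``first-moment balancing'' of the arrow demand $\E[(X+1)^2]$ against supply reproduces the right numerology, but it is a mean-field heuristic of exactly the kind the introduction warns against (the Droz--Krapivsky heuristic wrongly predicts universality); it proves neither that $q<1$ for $p>p_*$ nor that $q=1$ for $p<p_*$. Likewise, ``implicit differentiation of \eqref{eq:qform} combined with a monotone-coupling step'' cannot work: the paper explicitly notes that BA is not monotone under the natural coupling (turning an arrow into a blockade can prolong other arrows' lives), so no such coupling is available, and implicit differentiation only describes the analytic branch $u=F(v):=(1-v)^2/D(v)$ --- it cannot exclude the pathology that $q$ jumps between the trivial root $q=1$ and that branch. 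The paper's actual resolution is to prove continuity of $q$ in $p$ \emph{independently} of the recursion: $q$ is lower semicontinuous by finite-volume approximation, and upper semicontinuous because $\theta=(1-q)^2$ equals $\max\bigl(0,\sup_n n^{-1}\E[W(1,n)]\bigr)$ for a superadditive finite-volume functional $W$ (surviving blockades minus surviving arrows), the supremum characterization being proved by comparison with a random walk with positive drift. Given continuity, $h(p,q)=0$ forces $p=F(q)$ whenever $q<1$, one computes $F(v)\to p_*=1/((\E[\mu]+1)^2+\var(\mu))$ as $v\to1$ by L'Hospital, and $(p_*,1)$ is the unique point where $q$ can continuously leave the branch $q\equiv1$; strict decrease then follows because $F$ is a left inverse of $q$ on $[p_*,1]$. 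You would need to supply this continuity argument (or a substitute of equal strength) for your plan to close.
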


From this we obtain four immediate corollaries. The first is that the value $p_c=1/4$ in BA from \cite{HST} is maximal. %among all $\mu$-clustered BA with $\E[\mu]=1$. 

\begin{corollary}
%$p_c$ at \eqref{eq:pcform} is maximized if $\p$ consists of a single atom. For example, 
%$p_c =1/4$ from \cite{HST} with 
The distribution with $\mu_1 =1$ has $p_c=1/4$. For all other $\mu$ with $\mu_1 <1$ and $\E[\mu]=1$, it holds that $p_c < 1/4$.
\end{corollary}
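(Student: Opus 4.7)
The plan is to deduce everything from the closed-form expression \eqref{eq:pcform} in \thref{thm:main}. Setting $\E[\mu]=1$ in \eqref{eq:pcform} gives the reduced identity
\begin{align}
p_c \;=\; \frac{1}{(\E[\mu]+1)^2+\var(\mu)} \;=\; \frac{1}{4+\var(\mu)},
\end{align}
so under the mean-one hypothesis the critical threshold is entirely controlled by $\var(\mu)$.

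Next I would separate into the two cases. If $\mu_1 = 1$ then $X \equiv 1$ almost surely, so $\var(\mu) = 0$ and the displayed identity gives $p_c = 1/4$; this also recovers the classical three-velocity BA value from \cite{HST}, which serves as a sanity check. If instead $\mu_1 < 1$ while $\E[\mu]=1$, then $X$ places positive mass on some $k \neq 1$. Because $\E[X] = 1$, the support of $\mu$ cannot be concentrated on a single value (a constant random variable with mean $1$ would force $\mu_1=1$), so $X$ is genuinely non-degenerate and hence $\var(\mu) > 0$. Plugging back into the reduced identity yields $p_c = 1/(4+\var(\mu)) < 1/4$.

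There is no real obstacle: the corollary is a one-line algebraic consequence of \thref{thm:main} together with the standard fact that a random variable has zero variance if and only if it is almost surely constant. The only point that warrants a brief sentence in the write-up is the verification that $\mu_1<1$ together with $\E[\mu]=1$ actually forces $\var(\mu)>0$, which is immediate from the characterization of degeneracy just mentioned.
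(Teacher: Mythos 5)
Your proof is correct and matches the paper's intent exactly: the paper presents this as an ``immediate corollary'' of \thref{thm:main} with no separate argument, and the content is precisely your observation that substituting $\E[\mu]=1$ into \eqref{eq:pcform} gives $p_c=1/(4+\var(\mu))$, with $\var(\mu)=0$ if and only if $X$ is almost surely constant, which under the mean-one constraint forces $\mu_1=1$. (Your argument also silently covers the $\var(\mu)=\infty$ case, where $p_c=0<1/4$, so nothing is missing.)
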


A surprising consequence of  \thref{thm:main} is that there is no phase transition whenever $\mu$ has infinite variance. 
\begin{corollary}
    If $\var(\mu)=\infty$, then $p_c=0$. 
\end{corollary}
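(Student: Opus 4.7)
The plan is to read the corollary straight off \thref{thm:main}: when $\var(\mu) = \infty$, the formula
$$p_c = \frac{1}{(\E[\mu]+1)^2 + \var(\mu)}$$
evaluates to $0$. Whether this one-liner is fully rigorous depends on whether the proof of \thref{thm:main} treats infinite-variance $\mu$ uniformly with the finite-variance case. If the derivation of the formula implicitly assumes a finite second moment (which is natural since the implicit equation \eqref{eq:qform} involves $f'(q)$ and its Taylor expansion near $q=1$), then an extra truncation step is needed.

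For the truncation, let $\mu^{(N)}$ denote the law of $X \wedge N$, where $X \sim \mu$. Each $\mu^{(N)}$ has bounded support and in particular finite variance, so \thref{thm:main} applies and gives
$$p_c(\mu^{(N)}) = \frac{1}{\E[(X \wedge N)^2] + 2\E[X \wedge N] + 1}.$$
By monotone convergence $\E[(X \wedge N)^2] \nearrow \E[X^2]$, and by hypothesis $\E[X^2] \geq \var(\mu) = \infty$, so the denominator blows up and $p_c(\mu^{(N)}) \to 0$ as $N \to \infty$.

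To pass from $\mu^{(N)}$ to $\mu$, I would couple the two processes on a common probability space: use identical spacings, the same designation of each site as a cluster or an arrow, and identical arrow velocities; at each cluster site $x_n$ place $X_n$ blockades in the $\mu$-process and $X_n \wedge N$ in the $\mu^{(N)}$-process. Since we are only ever adding blockades (never emitting arrows or relocating anything), an induction on collision order shows that any left arrow reaching the origin in the $\mu$-process also does so in the $\mu^{(N)}$-process. This gives the monotonicity $q(p,\mu) \leq q(p,\mu^{(N)})$ and hence $p_c(\mu) \leq p_c(\mu^{(N)}) \to 0$, which yields $p_c(\mu)=0$.

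The main obstacle is justifying the coupling. As the introduction emphasizes, three-velocity BA is notoriously non-monotone under general perturbations, so even this plausible comparison requires care: one must verify that the "extra" arrows that escape past depleted clusters in the $\mu^{(N)}$-process cannot, through cascading collisions, somehow deflect left arrows away from the origin in a way that would reverse the inequality. The perturbation here is of a restricted, benign form -- no arrow ever changes identity and no collision is ever removed -- which should make this verification routine, but it is the only substantive technical point in the proof.
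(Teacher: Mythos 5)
Your first line is exactly the paper's proof: the corollary is listed among the ``immediate'' consequences of \thref{thm:main}, with no further argument, so plugging $\var(\mu)=\infty$ into \eqref{eq:pcform} is all that is intended. Your worry about whether the theorem covers infinite variance is reasonable to raise, but it can be settled inside the theorem's own proof without any truncation: the recursion and the function $F$ in \eqref{eq:F} only ever evaluate $f'$ and $f''$ at arguments $v<1$, where the power series converge regardless of moments, and the only place the second moment enters is the limit $\lim_{v\to 1^-}F(v)=1/(f(1)+3f'(1)+f''(1))$, which by monotone convergence of $f''(v)\uparrow f''(1)=\E[X(X-1)]=\infty$ equals $0$. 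So $p_*=0$ and the rest of Section~\ref{sec:proof} goes through verbatim; no separate argument is needed. By contrast, your fallback coupling has a genuine gap that you underestimate: the claim that ``no collision is ever removed'' is false, since enlarging a cluster converts downstream arrow--arrow collisions into arrow--blockade collisions and vice versa, so a right arrow that escapes a truncated cluster in the $\mu^{(N)}$-process can annihilate a left arrow that reaches the origin in the $\mu$-process. The asserted inequality $q(p,\mu)\leq q(p,\mu^{(N)})$ is precisely the kind of monotonicity (``more blockades means fewer arrows get through'') that the introduction warns is \emph{not} obvious for ballistic annihilation, and no routine induction on collision order is known to establish it. Fortunately that route is unnecessary.
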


The third corollary concerns the rate that $p_c$ goes to 0 as the mean or variance of $\mu$ are augmented. 

\begin{corollary}
Let $\mu^{(n)}$ be a sequence of probability distributions with $\E[\mu^{(n)}] = m_n$ and $\var(\mu^{(n)}) = \sigma^2_n$. If $ \max(m_n , \sigma^2_n) \to \infty$, then $p_c(\mu^{(n)})\sim (m_n^2 + \sigma_n^2)^{-1}.$
\end{corollary}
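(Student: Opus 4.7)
The plan is to simply apply \thref{thm:main} to reduce the statement to an elementary asymptotic identity involving two sequences of nonnegative reals. By the main theorem,
\begin{align}
p_c(\mu^{(n)}) = \frac{1}{(m_n+1)^2 + \sigma_n^2},
\end{align}
so proving $p_c(\mu^{(n)}) \sim (m_n^2 + \sigma_n^2)^{-1}$ is equivalent to showing
\begin{align}
\frac{m_n^2 + \sigma_n^2}{(m_n+1)^2 + \sigma_n^2} = 1 - \frac{2m_n + 1}{(m_n+1)^2 + \sigma_n^2} \longrightarrow 1.
\end{align}
Thus the task reduces to verifying that $r_n := (2m_n + 1)/(m_n^2 + \sigma_n^2) \to 0$ under the hypothesis $\max(m_n, \sigma_n^2) \to \infty$ (this is a stronger denominator than $(m_n+1)^2 + \sigma_n^2$, so sufficient).

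Next I would handle $r_n \to 0$ by a short case analysis on an arbitrary subsequence. Fix any subsequence; by passing to a further subsequence we may assume $m_n$ either diverges or remains bounded. In the first case, $r_n \leq (2m_n+1)/m_n^2 \to 0$. In the second case, $m_n$ stays bounded so the hypothesis $\max(m_n, \sigma_n^2) \to \infty$ forces $\sigma_n^2 \to \infty$, and then $r_n \leq (2m_n+1)/\sigma_n^2 \to 0$ because the numerator is bounded. Since every subsequence admits a sub-subsequence along which $r_n \to 0$, the full sequence $r_n$ tends to $0$.

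I do not anticipate any genuine obstacle: once \thref{thm:main} is in hand, the corollary is a two-line computation plus an elementary case split. The only minor care needed is that the hypothesis $\max(m_n, \sigma_n^2) \to \infty$ does not imply either $m_n \to \infty$ or $\sigma_n^2 \to \infty$ individually, which is why I use the subsequence argument above rather than a single global estimate.
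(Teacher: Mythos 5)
Your proposal is correct and follows the only natural route: the paper states this as an immediate consequence of \thref{thm:main} without supplying details, and your computation of the ratio together with the subsequence case split (needed precisely because $\max(m_n,\sigma_n^2)\to\infty$ forces neither quantity individually to diverge) is exactly the verification the paper leaves to the reader.
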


Lastly, Benitez, Junge, Lyu, Redman, and Reeves studied a coalescing version of ballistic annihilation in which particles sometimes survive collisions \cite{benitez2020three}. The primary interest was determining the analogue of $p_c$ for these systems. However, they were unable to analyze the case in which blockades survive each collision with some fixed probability (see \cite[Remark 5]{benitez2020three}). This is equivalent to $\p$-clustered ballistic annihilation with $\p$ a geometric distribution. Thus, \thref{thm:main} gives the value of $p_c$ in this unsolved case.

\begin{corollary}
Let $\beta \in (0,1)$ and $\mu$ be a geometric distribution with parameter $\beta$, i.e., $\mu_k = (1-\beta)^{k-1}\beta$ for $k \geq 1$. For $\p$-clustered ballistic annihilation it holds that $$p_c = \f 1 { (\beta^{-1} + 1)^2 + (1-\beta) / \beta^2}$$
and, by solving \eqref{eq:qform} for $q$, 
\begin{align}
q(p) &= \frac{\sqrt{p^2 \beta-p^2-p \beta+2 p}-p \beta+\beta-1}{p \beta^2-p \beta+p-\beta^2+2 \beta-1}.\label{eq:geoq}
\end{align}
See Figure~\ref{fig:qgeo} for a plot when $\beta=1/2$.
\end{corollary}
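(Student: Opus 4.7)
The plan is to treat this corollary as a direct specialization of \thref{thm:main} to the geometric distribution, so the entire argument reduces to algebra.

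First I would establish the formula for $p_c$. Since $\mu$ is geometric with parameter $\beta$ and supported on $\{1,2,\dots\}$, standard computations give $\E[\mu]=1/\beta$ and $\var(\mu) = (1-\beta)/\beta^2$. Substituting these two quantities into \eqref{eq:pcform} yields
\begin{align}
p_c = \frac{1}{(\beta^{-1}+1)^2 + (1-\beta)/\beta^2},
\end{align}
which is exactly the claimed expression; no further work is required for this half of the statement.

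For the formula for $q$, I would compute the probability generating function
\begin{align}
f(q) = \sum_{k=1}^\infty (1-\beta)^{k-1}\beta\, q^k = \frac{\beta q}{1-(1-\beta)q}
\end{align}
and its derivative $f'(q) = \beta/(1-(1-\beta)q)^2$, then substitute these into \eqref{eq:qform}. The key simplification is that the factor $(1-(1-\beta)q)^2$ appearing in the denominator of $f'(q)$ becomes a common denominator for the bracketed expression, and cross-multiplying $(1-q)^2$ against it produces a polynomial identity. After expanding and collecting powers of $q$, the apparent quartic collapses (thanks to the factor $(1-q^2) = (1-q)(1+q)$ sharing a $(1-q)$ with $(1-q)^2$) to a quadratic in $q$ whose coefficients are polynomials in $p$ and $\beta$. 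Applying the quadratic formula then gives two candidate roots differing in the sign of the square root $\sqrt{p^2\beta - p^2 - p\beta + 2p}$.

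Finally, I would select the correct root by boundary-value considerations. Since $q$ is continuous and strictly decreasing on $[p_c,1]$ by \thref{thm:main}, with $q(p_c)=1$ and $q(1)=0$ (the latter because $p=1$ means no arrows are ever launched), testing these two endpoints against the two candidate roots singles out the $+$ branch as displayed in \eqref{eq:geoq}. The main obstacle is purely bookkeeping: keeping the algebraic manipulation tidy enough that the cancellation producing a quadratic is transparent, and verifying the sign of the radical at the endpoints to confirm the correct branch, rather than any genuine mathematical difficulty.
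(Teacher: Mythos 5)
Your overall strategy is the same one the paper intends (the corollary is presented as an immediate specialization of \thref{thm:main}), and the first half is fine: for the geometric law on $\{1,2,\dots\}$ one indeed has $\E[\mu]=\beta^{-1}$ and $\var(\mu)=(1-\beta)/\beta^2$, and substituting into \eqref{eq:pcform} gives the stated $p_c$ with nothing further to check.

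The second half, however, contains a step that fails as described: the claim that after substituting $f(q)=\beta q/(1-(1-\beta)q)$ and $f'(q)=\beta/(1-(1-\beta)q)^2$ into \eqref{eq:qform} and clearing denominators, the quartic ``collapses to a quadratic.'' The factor $1-q^2=(1-q)(1+q)$ multiplies only the single term $q^2f'(q)$ of the denominator, not the whole denominator, so it cannot be cancelled wholesale against the $(1-q)^2$ in the numerator. Carrying out the expansion, $(1-q)^2(1-(1-\beta)q)^2 - p\bigl[(1-q^2)q^2\beta - 2\beta q^2(1-(1-\beta)q) + (q^2+1)(1-(1-\beta)q)^2\bigr]$ is divisible by $(1-q)$ exactly once (its derivative at $q=1$ equals $4p\beta\neq 0$), leaving a genuine cubic in $q$. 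Concretely, at $\beta=p=1/2$ the residual equation is $3q^3-7q^2+16q-4=0$, whose unique real root is $q\approx 0.2802$; by contrast the displayed formula \eqref{eq:geoq} evaluates to $q\approx 0.3246$ there, and direct substitution of $0.3246$ into \eqref{eq:qform} gives $\approx 0.435\neq 1/2$ (while $0.2802$ gives $1/2$ to numerical precision). So your proof as written cannot terminate in the quadratic formula, and in fact it exposes a discrepancy between \eqref{eq:qform} and \eqref{eq:geoq} that one of you (you or the authors) must resolve: either the cubic factors in a way neither of us has exhibited, or one of the two displayed formulas carries a typo. Your final root-selection step (checking $q(p_c)=1$ and $q(1)=0$) is sound in principle but is not discriminating here, since \eqref{eq:geoq} passes both endpoint checks while failing at interior points. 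The fix is to actually perform the polynomial division by $(1-q)$ and confront the resulting cubic rather than asserting the collapse.
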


\begin{figure}
    \centering
    \includegraphics[width = 8 cm]{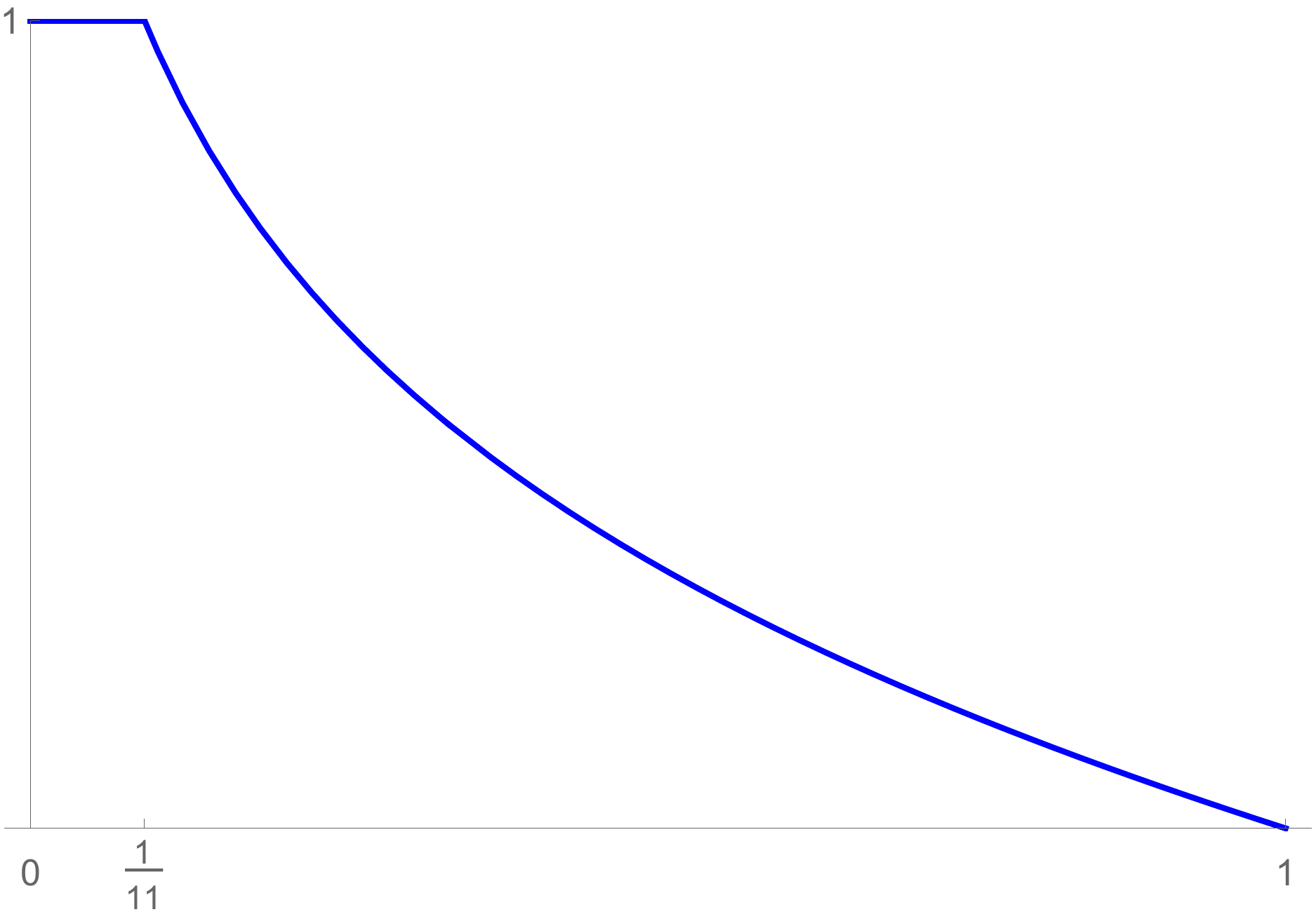}
    \caption{The plot of $q(p)$ from \eqref{eq:geoq} for $\p$-clustered BA with $\mu$ a (mean 2) geometric distribution with parameter $\beta=1/2$. Finding $q$ and $p_c$ resolves an open problem from \cite{benitez2020three}}
    \label{fig:qgeo}
\end{figure}

\subsection{Discussion}
There is no robust general theory that tells us whether or not a given interacting particle system will have a universal phase transition. Rather, universality is typically established on a case-by-case basis using disparate methods. On $\mathbb Z^d$, branching processes, diffusion-limited-annihilating systems, and activated random walk are processes known to have phase transitions that do not depend on the initial particle density  \cite{athreya2004branching, cabezas2018recurrence, rolla2019universality}. The frog model and directed parking processes on $d$-ary trees have phase transitions that depend on more than the average density \cite{aldous2022parking, curien2019phase, bahl2019parking, johnson2019sensitivity}. And, the number of visits to a distinguished site varies monotonically with the concentration of the initial particle placements for these processes on general families of graphs \cite{johnson2018stochastic, bahl2021diffusion}. Additionally, the limiting shape in first passage percolation is also known to be non-universal \cite{BK, Marchand, gorski2022strict}, as are certain mappings associated to spin glass models \cite{collet1983study}.

Our result has a  similar character as the main result of Curien and H\'enard from \cite{curien2019phase}. They studied the parking process on critical Galton-Watson trees whose offspring distribution has variance $\Sigma^2$. A parking spot is placed at each vertex of such a tree and a random number of cars with mean $m$ and variance $s^2$
arrive independently to the vertices. Cars drive towards the root and park in the first spot they encounter. Only one car is allowed per spot, so this reaction can be viewed as ballistic particles (cars) mutually annihilating with stationary blockades (spots). \cite[Theorem 1]{curien2019phase} established that the expected number of cars to reach the root is infinite if and only if $(1-m)^2 - \Sigma^2 (\sigma^2 +m^2 -1) <0$. Like our result, the phase behavior depends (in a surprisingly simple way) on both the mean and variance of the initial configuration. Note that a more complicated formula that depends on the entire distribution of $X$ was observed for parking on binary trees in \cite{aldous2022parking}. 

%A key difference between parking and BA is that when two cars meet in a parking process they continue driving. In BA, collisions between mobile particles results in mutual annihilation. Thus, the reason for the phase behavior in 

It is a priori unclear whether or not $p_c$ should only depend on $\E[\mu]$. On one hand, BA has dynamics similar to the systems considered in \cite{bahl2021diffusion}. So, it is reasonable to expect some sensitivity to the initial density of particles. On the other hand, the mean-field heuristic presented in \cite{droz95} and further clarified in \cite[Section (b)]{krapivsky95} suggests that $p_c$ might be universal. 
%Since arrows close the distance between one another at rate 2, whereas arrows close the distance between blockades at rate 1. 
The explanation in \cite{krapivsky95} assumes that arrow--arrow collisions, on average, occur at twice the rate of blockade--left arrow collisions.  This is ``based on the expectation that
the relative number of annihilation events is proportional to the relative velocities of the collision partners.'' If this ``expectation", which seems to only depend on the relative velocities of particle types, still holds in $\mu$-clustered BA, then the same heursistic would predict universality.
%$P_{+-}$ to  blockade--left arrow collisions occur at twice the rate of b $P_{0-}$ satisfies $$P_{+-}/P_{0-} =2.$$ 

%However, the lack of monotonicity makes proving such a dependence difficult. For example, both \cite{johnson2018stochastic, bahl2021diffusion} depend on a coupling that gives monotonicity for visits to a site after adding a particle. BA lacks this property. 

\thref{thm:main} settles the question. Put concisely, the more volatile $\mu$ becomes, the more space for arrow-arrow collisions, which enhances blockade survival. A more detailed heuristic for why the variance plays a role in the formula for $p_c$ in \thref{thm:main} comes from considering the extreme case in which $\mu_0 = (k-1)/k$ and $\mu_k = 1/k$ with $k$ a large integer. As $\var(\mu) = k-1$ and $\E[\mu] = 1$, \thref{thm:main} implies that $p_c = 1/ (k + 3)$. To see intuitively why this is the correct order, suppose that $0$ contains a $k$-cluster. Let $x_N$ be the next site to the right of $0$ that contains a $k$-cluster. We have $N$ is a geometric random variable with parameter $p/k$. Thus, we expect on the order of $(1-p)k/p$ arrows in $(0,x_N)$ along with some $0$-clusters. The amount of arrows that reach the boundary of $(0,x_N)$ should be comparable to the magnitude of the discrepancy between left and right arrows started in $(0,x_N)$. By the central limit theorem, the discrepancy is on the order of $\sqrt{k/p}$, and so this order of left arrows from $(0,x_N)$ will reach $0$ \cite{elskens1985annihilation}. For these arrows to eliminate a significant portion of the $k$-cluster at $0$, we would need $k \approx \sqrt{k/p}$, equivalently, $p \approx 1/k = 1 / \var(\mu)$. The true evolution is more complicated, but this suggests that $\var(\mu)$ matters and matches the order in our formula from \thref{thm:main}.

\subsection{Proof Overview}
Our proof has three main parts.
Section~\ref{sec:rec} is devoted to proving the recursive equation for $q$ in \thref{prop:qrec}. This is inspired by what was done in \cite{HST}, but instead uses a version of the mass transport principle first observed in \cite{JL} and refined in \cite{benitez2020three}. The basic idea is to partition the event associated to $q$ based on the velocity of $\b_1$.

 An important probability for deriving this recursion is
 $s_k = \P( (0 \la \L) \wedge (\R_1 \ra \B^k))$.  %\\
 %r &= \P( (0 \not \la \L) \wedge (\R_1 \ra \B)). \label{eq:r}
 In \cite{HST}, it was observed that $s_1= (1/2) p q^2$. This comes from the insightful observation that the event measured by $s_1$ has the same probability as the event in which two left particles arrive to $0$ with the time of the first arrival strictly smaller then the time between the first and second arrivals. Two arrivals occur with probability $q^2$ and, even with no explicit knowledge of the arrival time distribution, symmetry and independence give that the first arrival takes less time with probability $1/2$. 
%  In our setting we must compute for $k \geq 1$
%  \begin{align}
%  s_k &= \P( (0 \la \L) \wedge (\R_1 \ra \B^k)) \label{eq:s}. %\\
%  %r &= \P( (0 \not \la \L) \wedge (\R_1 \ra \B)). \label{eq:r}
%  \end{align}

 Computing $s_k$ for $k >1$ in the proof of \eqref{eq:s} is more involved. After applying the mass transport principle, this event partitions into various events in which $k+1$ left arrows arrive to 0 while satisfying non-symmetric spacing requirements. Remarkably, a broader symmetry (see \eqref{eq:syme} makes this case tractable and yields the simple formula $s_k = (1/2) p q^{k+1}$. 
    In the proof of \eqref{eq:r}, we use similar methods to give a relatively simple formula for the companion probability $r_k = \P( (0 \not \la \L) \wedge (\R_1 \ra \B^k))$.
  With these quantities in hand, it is straightforward to obtain \eqref{eq:qrec1}. 
 
  The second part is proving that $q$ is continuous in $p$. The proof closely follows the argument that $\theta$ is continuous in asymmetric three-velocity ballistic annihilation from \cite{JL}. The idea is to prove that $q$ is lower- and upper-semicontinuous (LSC and USC, respectively), and thus continuous. That $q$ is LSC follows from a simple approximation of $q$ from below using finite intervals. See \thref{prop:LSC}. Proving $q$ is USC is equivalent to proving that $\theta$ is LSC. This follows from a characterization of $\theta$ as a supremum of the expected number of surviving blockades minus surviving arrows in finite intervals in \thref{lem:theta}. If this value is positive, then a superadditivity property (see \thref{step:super}) allows for a comparison to a random walk with drift. In Section~\ref{sec:cont} we give the proof and refer to \cite{JL} for some of the more technical details. 
 
 The last step, in Section~\ref{sec:proof}, involves analyzing the recursion from \thref{prop:qrec}. The recursion implies that $0 = (1-q) h(p,q)$
 for an explicit function $h$.
  This tells us that either $q = 1$ or solves $h(p,q) = 0$. We prove that $h(u,1)$ has unique solution $u = p_c$ from \thref{thm:main}. The goal is then to show that $q$, for any $\mu$-clustered BA, resembles the function plotted in Figure~\ref{fig:qgeo}. A priori, it is not obvious how to prove that the roots of $h$ are well-behaved and that $q$ faithfully follows them. 
  %We first deduce that there is only one candidate level curve of $h=0$ in $[0,1]^2$. At this point, t
  The continuity of $q$ proven in Section~\ref{sec:cont} is crucial for ruling out the pathology that $q$ jumps between 1 and solutions to $h=0$. This is a novel and seemingly powerful approach. Both \cite{HST} and \cite{benitez2020three} relied on an explicit characterization of the solutions to $h(p,q)=0$, which came from the quadratic formula, to infer $p_c$. The increased generality of our method seems promising for studying more general BA processes.

\subsection{Further Questions}
It would be interesting to generalize to the setting with $\nu$-distributed clusters of arrows. There should be no problem extending the argument from \thref{thm:cont} that $q$ is continuous. However, computing the analogues of $s_k$ and $r_k$ appears challenging. In our setting, we were able to control the various ways in which an arrow collides with a blockade cluster. Since arrow clusters move, it is more difficult to obtain formulas for arrow--arrow collision events.
%For example, if we allow for arrow clusters of size 1 and 2, then $\P(\R_1^1 \lrl \L^2)$ does not have an obvious relationship with $\P(\R_1^1 \lrl \L^1)$ and similarly for subevents of $\P(\R_1^2 \lrlr \L)$.
In line with monotonicity results from \cite{bahl2021diffusion}, we conjecture that $p_c$ increases with the mean and variance of $\nu$. 

Another followup question is determining if the behavior of $\mu$-clustered BA at $p=p_c$ is universal. \cite[Theorem 3]{HST} proved that $\P_{p_c}(\B_1 \text{ survives beyond time $t$})$ and $\P_{p_c}(\R_1 \text{ survives beyond time $t$})$ are on the order of $t^{-2/3}.$ This is proved by deriving a recursive relationship for the generating function of the index of the first particle to arrive to $0$, and then conducting singularity analysis. We did not pursue this here, but it is a possibly tractable next step in exploring the universality of BA.

Studying the phase transition for diffusion-limited annihilating systems in which all collisions result in mutual annihilation is an interesting direction. For example, consider the variant of BA in which, rather than following ballistic trajectories, left and right arrows performed random walk. The literature \cite{BL3, cabezas2018recurrence, johnson2020particle} has focused on the setting in which mobile particles do not interact. To the best of our knowledge, proving that there is a phase transition in systems with diffusive, mutually annihilating mobile particles is open. 

\section{Recursion} \label{sec:rec}
The goal of this section is to prove the following recursive formula.

\begin{proposition} \thlabel{prop:qrec}
%Let $q$ be as defined at \eqref{eq:q} and $f(t) = \sum_{k\geq 0} \mu_k t^k$ be the probability generating function of $\mu$. 
%It holds that 
\begin{align}
q= \frac{1-p}{2} + pqf(q) + s + q\left(\frac{1-p}{2} - s - r \right) \label{eq:qrec1}
\end{align}
with
\begin{align}
s &:= \P((0 \la \L) \wedge (\R_1 \ra \B))= \frac{pq^2}{2} f'(q)  \label{eq:s}   \\
 r &:= \P((0 \la \L) \wedge (\R_1 \ra \B))= \frac{pq{\left(q^{2}f'\left(q\right) - q f'\left(q\right) - f\left(q\right) + 1\right)}}{1- q}.  \label{eq:r}
\end{align}
% Formulas for $s$ and $r$ in terms of $q$ are  at \eqref{eq:sform} and \eqref{eq:rform}, respectively.
\end{proposition}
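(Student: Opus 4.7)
My plan is to partition the event $\{0 \la \L\}$ by the identity of $\bullet_1$, the first particle to the right of $0$. Since $(0, x_1)$ contains no other particles, any left arrow reaching $0$ is either $\bullet_1$ itself or first passes through $x_1$. This gives three cases: $\bullet_1 = \L_1$ with probability $(1-p)/2$, contributing $(1-p)/2$ outright since $\L_1$ travels unobstructed to $0$; $\bullet_1 = \B_1^k$ with probability $p\mu_k$; and $\bullet_1 = \R_1$ with probability $(1-p)/2$.

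For the cluster case, $\{0 \la \L\}$ occurs iff at least $k+1$ left arrows from $(x_1, \infty)$ arrive at $x_1$: the first $k$ exhaust the blockades, and the $(k+1)$-st passes through to $0$. Using translation invariance together with a restart argument (after each arrival the remaining right-side process has the same distribution as a fresh copy), this conditional probability is $q^{k+1}$, so the contribution is $\sum_k p\mu_k q^{k+1} = pq f(q)$. For $\bullet_1 = \R_1$, I further split by whether $\R_1 \ra \B$ (contributing $s$ by definition) or $\R_1 \lrl \L_j$. On the latter event the particles in $(x_1, x_j)$ fully self-annihilate, so $\{0 \la \L\}$ happens iff some $\L$ from $(x_j, \infty)$ reaches $x_j$ (after which it proceeds unobstructed to $0$); by stationarity this has conditional probability $q$. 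Since $\P(\R_1 \ra \B) = s + r$, this case contributes $q\bigl[(1-p)/2 - s - r\bigr]$, and summing the four pieces yields \eqref{eq:qrec1}.

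The main work is identifying $s$ and $r$ in closed form. Writing $s = \sum_k \P((0 \la \L) \wedge (\R_1 \ra \B^k))$, the $k = 1$ term follows the HST argument: a mass transport / reflection symmetry identifies the event with two $\L$'s reaching $0$ such that the first arrival time is strictly less than the gap to the second, which by time-symmetry has probability $\tfrac{1}{2} q^2$, giving $s_1 = \tfrac{1}{2} p \mu_1 q^2$. For $k > 1$ the mass transport decomposes the event into pieces indexed by the rank of $\R_1$ in the sequence of $k+1$ arrivals at the $k$-cluster, each piece carrying non-symmetric spacing constraints on the arrival times; a broader symmetry collapses these pieces into equal contributions and yields $s_k = \tfrac{1}{2} p k \mu_k q^{k+1}$, summing to $s = (pq^2/2) f'(q)$. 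An analogous decomposition of $\{(0 \not \la \L) \wedge (\R_1 \ra \B^k)\}$, with an additional geometric tail sum over how many further $\L$'s strike the cluster beyond the minimum needed, gives \eqref{eq:r}; the $1/(1-q)$ factor reflects exactly this tail sum. The hardest step will be establishing the broader symmetry and carrying out the combinatorial bookkeeping for $k > 1$, since $k = 1$ reduces to elementary time-reversal but $k > 1$ requires controlling the interaction between $\R_1$'s rank in the arrival sequence and the cluster multiplicity.
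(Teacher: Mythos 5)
Your proposal is correct and follows essentially the same route as the paper: the same partition on the identity of $\b_1$, the same restart argument for the cluster and $\R_1\lrl\L$ cases, the mass-transport-plus-symmetry computation of $s_k=\tfrac12 pk\mu_k q^{k+1}$, and the double-indexed decomposition of $r_k$ over the rank of $\R_1$ and the exact number of left arrivals. The only loose point is your gloss on the $1/(1-q)$ factor in \eqref{eq:r}: in the paper it arises from evaluating the \emph{finite} geometric sums $\sum_{i=1}^{k}\sum_{j=0}^{k-i}q^{i+j}(1-q)$ (each term already carrying the $(1-q)$ for ``no further left arrow arrives''), not from an infinite tail sum.
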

\begin{proof}[Proof of \eqref{eq:qrec1}]
%We will prove \eqref{eq:s} and \eqref{eq:r} momentarily. For now, we focus on establishing \eqref{eq:qrec1}. 
We partition $q$ in terms of the velocity of the first particle
\begin{align}
q = \P((0\la \L) \wedge \L_1) + \P((0 \la \L) \wedge \B_1) + \P((0 \la \L) \wedge \R_1) 
\label{eq:q1}
\end{align}
and will provide a formula for each summand.
% We calculate each of these parts separately. If $\L_1$, then the $\P(0 \la \L) = 1$ since the first particle cannot collide. Thus,
It is immediate that
\begin{align}
\P((0\la \L) \wedge \L_1) = \frac{1-p}{2}. \label{eq:t1}
\end{align}
For the second summand, we further partition on the size of $\B_1$ to write
\[
\P((0 \la \L) \wedge \B) = \sum_{k=0}^\infty \P( (0 \la \L) \wedge \B_1 \wedge (X_1=k) ).
\]
If $X_1 = k$, then $k+1$ left arrows must arrive at $x_1$ in order for $0$ to be visited. This happens if and only if the $j$th left arrow to arrive reaches the starting location of the $(j-1)$th left arrow to arrive for $j=2,\hdots,k+1$. By similar reasoning as \cite[Lemma 7]{HST}, each of these arrivals is conditionally independent and has probability $q$. Thus, for $k \geq 0$ we have
$$\P( (0 \la \L) \wedge \B_1 \wedge (X_1=k )) = p \cdot p_k q^{k+1}.$$
Summing over $k$ gives 
\begin{align}
 \P((0 \la \L) \wedge \B_1)= pqf(q). \label{eq:t2}
\end{align}
%Then, using the fact that $x_{n-1} \la \L_n$ and $x_n \la \L_{n+1}$ implies $x_{n-1} \la \L_{n+1}$, we have that
% \[
% \P((0 \la \L) \wedge \B^k_1) = p_kp\prod_{i=1}^{k+1} \P(x_{i-1} \la \L)_{[x_{i-1}, \infty)} = p_kpq^{k+1}
% \]
% Thus,

%Now, if $\B_1$, all particles that collide at $\B_1$ must be $\L$. Thus, when $X=k$, for $(0 \la \L)$ to occur, k+1 left arrows must arrive at the site of $\B_1^k$. Thus,

%Lastly, the case with $\R_1$ is more involved. It is hard to find $\P(\R_1 \lrl \L)$. Thus, we take advantage of the fact that $\R_1$ must annihilate and write
A similar argument as \cite[Lemma 3.3]{ST} implies that all arrows are eventually annihilated. Since $\P(\R_1 \ra \B)= s+r$, we may write
\begin{align}
\P(\R_1) = \frac{1-p}{2} &= \P(\R_1 \ra \B) + \P(\R_1 \lrl \L)\\
&= s+r + \P(\R_1 \lrl \L). \label{eq:R1}
\end{align}
For $0$ to be visited on the event $\{\R_1\}$, the particle $\R_1$ must first be annihilated. We partition on the collision type: 
\begin{align}
\P((0\la \L) \wedge \R_1) &= \P((0 \la \L) \wedge (\R_1 \ra \B)) +  \P((0 \la \L) \wedge (\R_1 \lrl \L)) \\
&=s + q \P(\R_1 \lrl \L) \label{eq:lrl}\\
&= s + q\left( \frac{1-p}{2} - s - r \right) \label{eq:R1'}.
\end{align}
The equality at \eqref{eq:lrl} follows from the definition of $s$ and the fact that
$\P(0 \la \L \mid \R_1 \lrl \L)= q$.
This fact follows from the observation that conditional on $(\R_1 \lrl \L_j)$ for some $j>1$, $(0 \la \L)$ occurs if and only if $(x_j \la \L)_{(x_j, \infty)}$, which has probability $q$. The move to \eqref{eq:R1'} then uses \eqref{eq:R1}.
%\[= q(\P(\R_1 \lrl \L)) = s + q\left( \frac{1-p}{2} - s - r \right)
%\]
%... + \P((0 \la \L) \wedge (\R_1 \lrl \L)) = ... + q(\P(\R_1 \lrl \L))
Combining \eqref{eq:t1}, \eqref{eq:t2}, and \eqref{eq:R1'} in \eqref{eq:q1} gives \eqref{eq:qrec1}.
% ,
% $$q= \frac{1-p}{2} + pqf(q) + s + q\left(\frac{1-p}{2} - s - r \right)$$
\end{proof}

Next, we will prove the formulas for $s$ and $r$ at \eqref{eq:s} and \eqref{eq:r}, respectively. These require the use of a Mass Transport Principle based on translation invariance.
%, which shows that the probability of an event occurring on the interval $[x_a,x_b]$ is equal to the probability of the same event occurring on the $[x_a + \ell, x_b + \ell]$ for any $\ell \in \mathbb{R}$. 
\begin{proposition}[Mass Transport Principle] \thlabel{prop:mtp}
    Define a non-negative random variable $Z(m,n)$ for integers $m,n \in$ Z such that its distribution is diagonally invariant under translation, i.e., for any integer $\ell$, $Z(m+\ell,n+\ell)$ has the same distribution as $Z(m,n)$. Then for each $m \in \Z:$
    \begin{align}
        \E \sum\limits_{n \in Z} Z(m,n) = \E \sum\limits_{n \in Z} Z(n,m).
    \end{align}
\end{proposition}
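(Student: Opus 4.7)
The plan is to reduce both sides of the claimed equality to the same one-dimensional series by exploiting the translation invariance, via the standard mass-transport argument. First, because $Z(m,n)\ge 0$, Tonelli's theorem lets me pass the expectation through the countable sum, so both sides can be rewritten as
\[\sum_{n\in\Z}\E Z(m,n)\qquad\text{and}\qquad\sum_{n\in\Z}\E Z(n,m),\]
with the values interpreted in $[0,\infty]$.

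Second, I would set $g(k):=\E Z(0,k)$ and invoke the diagonal translation invariance with shift $\ell=-m$: this gives that $Z(m,n)$ and $Z(0,n-m)$ share a distribution, whence $\E Z(m,n)=g(n-m)$. Applying the same invariance to the transposed entry yields $\E Z(n,m)=g(m-n)$. Since $Z\ge 0$, no integrability hypothesis is required, and these identities hold as equalities in $[0,\infty]$.

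Finally, the bijective reindexings $k=n-m$ and $k=m-n$ turn both series into $\sum_{k\in\Z} g(k)$, which is the desired equality. The closest thing to an obstacle is simply the bookkeeping that both sides may be infinite and that the identity must be read in $[0,\infty]$; otherwise this is the textbook mass-transport principle, whose proof collapses to Tonelli plus a change of variables. Its role in the paper is to let one trade a marginal collision event at the origin for a translation-summed version, which is precisely what will power the derivations of $s$ and $r$ that follow.
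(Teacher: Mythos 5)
Your proof is correct and is essentially the same as the paper's: both apply Tonelli/Fubini to exchange expectation and sum, use diagonal translation invariance to express each term as a function of the index difference, and reindex the resulting series. The only cosmetic difference is that you normalize via $g(k)=\E Z(0,k)$ while the paper shifts by $\ell=m-n$ directly to write $\E[Z(m,n)]=\E[Z(2m-n,m)]$.
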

\begin{proof}
Fubini's theorem and translation invariance give
\begin{align}
    \E \sum\limits_{n \in Z} Z(m,n) &=  \sum\limits_{n \in Z} \E [Z(m,n)] \\
    &=\sum\limits_{n \in Z} \E [Z(2m-n,m)]
    = \sum\limits_{n \in Z} \E [Z(n,m)] = \E \sum\limits_{n \in Z} Z(n,m).
\end{align}
\end{proof}

% \begin{lemma} \thlabel{lem:s} 
% $$s := \P((0 \la \L) \wedge (\R_1 \ra \B)) = \f{pq^2}{2} f'(q).$$
% \end{lemma}
\begin{proof}[Proof of \eqref{eq:s}]
Let $s_k = \P(( 0 \la \L) \wedge (\R_1 \ra \B^k))$ so that $s = \sum_{k=0}^\infty s_k$. We will use the mass transport principle to relate the event associated to $s_k$ to one that involves $k+1$ arrows arriving to the site containing a $k$-cluster. To this end, define 
%The argument depends on considering how many particles collide with the blockade from the left and right respectively. To prove that there are always enough particles to do so, we use the Mass Transport Principle.
\begin{align}
    Z_k^j(a,b) &= \sum_{c \in \mathbb Z}\Big[ \mathbf{1}\{\B_b^k \wedge (\R_a \xrightarrow[]{j} x_b)_{[x_a, x_b)} \wedge (x_b \xleftarrow[]{k+1-j} \L_c)_{(x_b, x_c]} \\
    & \qquad \qquad\qquad \qquad\qquad \qquad\qquad \qquad \wedge (x_b - x_a < x_c - x_b) \} \Big ]
\end{align}
%
% \[
% Z_k^j(a,b,c) = \mathbf{1}\left\{\B_b^k \wedge (\R_a \xrightarrow[]{j} x_b)_{[x_a, x_b)} \wedge (x_b \xleftarrow[]{k+1-j} \L_c)_{(x_b, x_c]} \wedge (x_b - x_a < x_c - x_b) \right\}
% \] 
for $a,b,j, k \in \mathbb Z$. 

Observe that 
\begin{align}
    s_k^j := \P((\R_1 \overset{\; j} \ra \B^k) \wedge (0 \la \L)) =\E\sum_{b\in \mathbb{Z}} Z_k^j(1,b) .
\end{align}
Define $\vec D_j$ to be the starting distance from $x_1$ of the $j$th particle to arrive to $x_1$ in the process restricted to particles in $(-\infty, x_1)$. We set $\vec D_j = \infty$ whenever fewer than $j$ particles ever visit $x_1$. Define $\cev D_j$ similarly, but on $(x_1,\infty)$. 
By \thref{prop:mtp} and independence, $s_k^j$ is equal to
\begin{align}
 \E \textstyle{\sum_{a \in \mathbb{Z}}} Z_k^j(a,1) &= \P(\B_1^k) \P((\R \overset{j} \ra x_1)_{(-\infty, x_1)} ) \\
 & \qquad\qquad\times \P((x_1 \xleftarrow[]{k+1-j} \L)_{(x_1,\infty)}) \P(\vec D_j < \cev D_{k+1-j}) \\
 &=p\cdot p_kq^jq^{k+1-j}\P(\vec D_j < \cev D_{k+1-j}). \label{eq:sjk}
\end{align}
Since $s_k = \sum_{j=1}^k s_k^j$, \eqref{eq:sjk} gives
$$s_k = p \cdot p_k q^{k+1}  \sum_{j=1}^k  \P(\vec D_j < \cev D_{k+1-j} ).$$

If $k$ is even, then grouping summands gives
\begin{align}
\sum_{j=1}^k  \P(\vec D_j < \cev D_{k+1-j} ) &=  \sum_{j=1}^{k/2} \left[ \P(\vec D_j < \cev D_{k+1-j} ) +  \P(\vec D_{k+1-j} < \cev D_{j} ) \right] \\
&= \f k2 \label{eq:syme}.
\end{align}
We have $\P(\vec D_j < \cev D_{k+1-j} )+\P(\vec D_{k+1-j} < \cev D_{j} )=1$, because $\vec D_m$ and $\cev D_m$ are continuous, independent, and identically distributed random variables.
Using similar reasoning, if $k=2m+1$ is odd, then we can write $\sum_{j=1}^k  \P(\vec D_j < \cev D_{k+1-j} )$ as
\begin{align}
  \sum_{j=1}^{m} \left[ \P(\vec D_j < \cev D_{k+1-j} ) +  \P(\vec D_{k+1-j} < \cev D_{j} ) \right] + \P(\vec D_{m +1} < \cev D_{m + 1}),
\end{align}
  which equals $m + (1/2) = k/2$.
%
% By \thref{prop:mtp}
% \[
% \E\sum_{b\in \mathbb{Z}} Z_k^j(1,b) = \E \sum_{a \in \mathbb{Z}} Z_k^j(a,1) = p_mq^jq^{m+1-j}\P(\vec D_j < \cev D_{m+1-j}),
% \]
% where $s_k^j = \P((\R_1 \overset{\; j} \ra \B^k) \wedge (0 \la \L))$.
% Now, we note that there are exactly $k-1$ disjoint ways that $((0 \la \L) \wedge (\R_1 \ra \B^k))$ can occur. These are when $a \in [1,k]$ particles collide from the left and $b =k-a$ from the right and a left moving particle, $\L_z$ arrives at 0. This can only happen when $d_a = \textrm{dist}(\R_1, \B^k) < \textrm{dist}(\B^k, \L_z) = d_b$. Thus,
%
% \[
% s_k = \sum_{j=1}^k s_k^j = \sum_{a+b=k-1} \P(d_{a} < d_{b}) \P(\R_1 \ra \B^k) \P(\R \ra \B^k)^{a-1} \P(\B^k \la \L)^b \P(0 \la \L_z).
% \]
% Then,
% \[
% s_k = p_k\sum_{a+b = k-1} \P(d_{a} < d_{b}) pq^{a+b+2}.
% \]
% Now consider the following pair of cases. First, when $a = \alpha$, $b= \beta$ and then when $a = \beta, b = \alpha$. Then, by symmetry, $\P(d_\alpha < d_\beta) = 1 - \P(d_\beta < d_\alpha)$. Thus, if $k$ is even, then
% \[
% \sum_{a+b = k-1} \P(d_{a} < d_{b})q^{k+1} = \frac{k}{2}q^{k+1}.
% \]
% Otherwise, when $k$ is odd, we have that all cases where $a\neq b$ have a unique and distinct case that meets an opposite distance inequality.
% \[
% \sum_{a+b = k-1} \P(d_{a} < d_{b})q^{k+1} = \frac{k-1}{2}q^{k+1} + \P(d_{a=b}< d_{b=a})q^{k+1} = \frac{k}{2}q^{k+1}.
% \]
% Thus, by symmetry, we have, that for all $k$
% \[
% s_k = p_k\sum_{a+b = k-1} \P(d_{a} < d_{b})q^{k+1} = p_k\frac{k}{2}q^{k+1}.
% \]
Hence, $s_k = p \cdot p_k q^{k+1} (k/2)$. Summing gives 
\[
s =  \sum_{k=0}^\infty s_k = \frac{pq^2}{2} \sum_{k=0}p_kkq^{k-1}  = 
\frac{pq^2}{2}f'(q).
\]
\end{proof}

% \begin{lemma} \thlabel{lem:r} 
% $$r := \P((0 \not \la \L) \wedge (\R_1 \ra \B)) = \f{pq{\left(q^{2}f'\left(q\right) - q f'\left(q\right) - f\left(q\right) + 1\right)}}{1-q}.$$
% \end{lemma}

\begin{proof}[Proof of \eqref{eq:r}]
Let $r_k = \P( (0 \not\la \L) \wedge (\R_1 \ra \B^k))$ so that $r = \sum_{k=0}^\infty r_k$.
%
%There are exactly $i$ disjoint cases where this event can happen. These are when $i \in [1,k]$ right arrows hit the same $\B$. Then, each of these cases has $i-j$ sub-cases. Namely, when exactly $j\in [0,k-i]$ particles hit the blockade from the right.
As we did for the proof of \eqref{eq:s}, we apply the Mass Transport Principle with new indicators
\begin{align}
W_k^{i,j}(a,b) &= 
\textstyle \sum_{c \in \mathbb Z} \mathbf{1}\{\B_b^k \wedge (\R_a \xrightarrow[]{i} x_b)_{[x_a, x_b)} \wedge (x_b \xleftarrow[]{j} \L_c)_{(x_b, x_c]} \wedge (x_c \not \la \L)_{(x_c, \infty)}\} 
\end{align}
for $i,j,k,a,b \in \mathbb Z$.
% \ind{ (\R_a \xrightarrow[]{i} \B_b^k)_{[x_a,x_b]} \wedge (D_j < \infty) \wedge (D_{j+1} = \infty)}
% \]
% where $D_\ell := \textrm{dist}(x_b, x_\ell)$ where $\L_\ell$ is the $\ell^{\textrm{th}}$ particle to arrive at $x_b$ from the right and begins at $x_\ell$.
Let $(\R_1 \overset{i}\ra \B^k \overset{\;\;\;\;j^*}\la \L)$ denote the event that $\R_1$ is the $i$th right arrow to annihilate with a $k$-cluster and \emph{exactly} $j$ left arrows visit that same $k$-cluster. Observe that for $i + j \leq k$ with  $i \neq 0$, we have
$$r^{i,j}_k := \P( (0 \not \la \L) \wedge (\R_1 \overset{i} \ra \B^k \overset{\;\;\;\;j^*} \la \L)) = \E \sum_{b \in \Z} W_k^{i,j}(1,b).$$
By \thref{prop:mtp} and independence, 
\begin{align}
r^{i,j}_k=\E \sum_{a \in 
\Z} W_k^{i,j}(a,1) &= \P(\B_1^k) \P( (\R \overset{i} \ra x_1 )_{(-\infty, x_1)} )\P( (x_1 \overset{j} \la \L)_{(x_1,\infty)} ) \\
&\qquad \qquad \qquad \qquad \times\P(\cev D_{j+1} = \infty \mid \cev D_j < \infty)\\
&=p\cdot p_k q^i q^j (1-q).
\end{align}
We then have
$$r_k = \sum_{i=1}^{k}\sum_{j=0}^{k-i} r_k^{i,j}= p\cdot p_k\sum_{i=1}^k \sum_{j=0}^{k-i} q^{i+j}(1-q).$$ 
Applying the formula $\sum_{i=0}^m a^i = ({1- a^{m+1}})/({1- a})$ twice, gives 
%
% \[
% r_k = \sum_{i,j}\sum_{b \in \mathbb{Z}} E(\ind {(\R_1 \xrightarrow[]{i} \B_b) \wedge (D_j < \infty) \wedge (D_{j+1} = \infty) } = \sum_{i,j}(\sum_{b\in \mathbb{Z}} E(Z(1,b))
% \]
% \[
% = \sum_{i,j} \sum_{a\in \mathbb{Z}} E(Z(a,1)) = \sum_{i,j} \sum_{a\in \mathbb{Z}} E(\ind{(\R_a \xrightarrow[]{i} \B_1^m) \wedge (D_j < \infty) \wedge (D_{j+1} = \infty) }).
% \]
% Then, we derive the claimed formula by factoring to get a geometric series and separating the sum to get a second geometric series.
\[
r_k = p\cdot p_k\frac{q \left(k q^{k+1}-k q^k-q^k+1\right)}{1-q}.
\]
Hence,
\[
r = \sum_{k=0}^\infty r_k = \frac{pq{\left(q^{2}f'\left(q\right) - q f'\left(q\right) - f\left(q\right) + 1\right)}}{1-q}.
\]
\end{proof}

\section{Continuity} \label{sec:cont}

The goal of this section is to prove that $q$ is continuous in $p$ by proving that it is both upper and lower semi-continuous. We begin by recalling these definitions and stating a few classical facts. A function $\varphi$ is \emph{upper semi-continuous} (USC) at each $p_0 \in [0,1]$ if and only if ${\limsup _{p\to p_{0}}\varphi(p)\leq \varphi(p_{0})}$. It is \emph{lower semi-continuous} (LSC) at each $p_0 \in [0,1]$ if and only if it holds that ${\liminf_{p\to p_{0}}\varphi(p)\geq \varphi(p_{0})}$.
 Rather than working directly with the definition, we will apply the following properties. See \cite{hobson1927theory} for proofs.

\begin{fact} \thlabel{fact:SC}
The following hold.
%and $y>f(p_0)$ it holds that $\varphi(p) < y$ for all $p$ in some neighborhood of $p_0$. Analogously,  $\varphi$ is \emph{lower semi-continuous} if $\varphi(p) > y$ for all $p$ in some neighborhood $p_0$. We abbreviate these properties as USC and LSC, respectively, and recall some basic facts.
%Consider functions $\varphi, \psi$ and $\varphi_n$ defined on $[0,1]$.
\begin{enumerate}[label = (\alph*)]
    \item $\varphi$ is continuous if and only if $\varphi$ is USC and LSC. \label{fact:cont}
    \item If there exists a sequence of LSC functions $\varphi_n$ with $\varphi_n \uparrow \varphi$, then $\varphi$ is LSC. \label{fact:inc}
    \item If $\varphi(p) = \sup_n (\varphi_n(p))$ with $\varphi_n$ LSC, then $\varphi$ is LSC. \label{fact:sup}
    \item If $\varphi_1$ and $\varphi_2$ are LSC, then $\max(\varphi_1,\varphi_2)$ is LSC. \label{fact:max}
    \item $\varphi$ is LSC if and only if $-\varphi$ is USC. \label{fact:neg}
%    \item If $\varphi$ and $\psi$ are both LSC and nonnegative, then $\varphi \cdot \psi$ is LSC. Similary, if both are USC and nonnegative, then the product is USC. \label{fact:product}
    \item If $\psi$ is continuous and $\varphi$ is LSC, then $\psi\circ \varphi$ is LSC. Similarly, if $\varphi$ is USC, then $\psi \circ \varphi$ is USC. \label{fact:comp}
    \item If $\varphi$ and $\psi$ are both LSC or USC, then so is $\varphi + \psi$. \label{fact:sum}
\end{enumerate}
\end{fact}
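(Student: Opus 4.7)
The plan is to reduce all seven items to the two equivalent open-set characterizations of semi-continuity. Namely, a one-line argument from the $\liminf/\limsup$ definitions shows that $\varphi$ is LSC if and only if the superlevel set $\{p : \varphi(p) > \alpha\}$ is open for every $\alpha \in \mathbb{R}$, and $\varphi$ is USC if and only if $\{p : \varphi(p) < \alpha\}$ is open for every $\alpha$. With this in hand, part (a) is the observation that continuity at $p_0$ is precisely the conjunction $\limsup_{p \to p_0}\varphi(p) \leq \varphi(p_0) \leq \liminf_{p \to p_0}\varphi(p)$, and part (e) follows from the identity $\limsup(-\varphi) = -\liminf(\varphi)$ applied pointwise.

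For (b), (c), and (d) the common idea is a short $\varepsilon$-neighborhood argument. For (c), I would fix $p_0$ and $\varepsilon > 0$, choose $n$ with $\varphi_n(p_0) > \varphi(p_0) - \varepsilon$, and then use LSC of the single function $\varphi_n$ to produce a neighborhood $U$ of $p_0$ on which $\varphi_n > \varphi(p_0) - \varepsilon$; since $\varphi \geq \varphi_n$ pointwise this gives $\liminf_{p \to p_0}\varphi(p) \geq \varphi(p_0) - \varepsilon$, and sending $\varepsilon \to 0$ yields LSC of $\varphi$. Part (b) is then the special case in which an increasing pointwise limit equals a pointwise supremum, and (d) is the two-term instance of (c). For (g), I would invoke the superadditivity inequality $\liminf_{p \to p_0}(\varphi + \psi)(p) \geq \liminf_{p \to p_0}\varphi(p) + \liminf_{p \to p_0}\psi(p)$, which immediately gives LSC of $\varphi + \psi$ from LSC of the summands, and the dual subadditivity of $\limsup$ handles the USC case.

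The one step I expect to need care is (f). As stated it is false for general continuous $\psi$ (for instance $\psi(x) = -x$ converts LSC into USC), so the intended hypothesis must be that $\psi$ is continuous and monotone nondecreasing, which is consistent with how this fact is invoked later in the paper. Under that reading, the open-set characterization gives a one-line proof: $\{p : \psi(\varphi(p)) > \alpha\} = \{p : \varphi(p) > s(\alpha)\}$ with $s(\alpha) := \inf\{t : \psi(t) > \alpha\}$, and the right-hand set is open by LSC of $\varphi$; the USC case is dual. Apart from pinning down this monotonicity convention, everything here is classical real analysis, so I do not expect any material obstacle beyond orderly bookkeeping of the $\varepsilon$-$U$ selections.
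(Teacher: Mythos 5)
Your proposal is correct, and it supplies considerably more than the paper does: the paper gives no proof of \thref{fact:SC}, deferring entirely to the classical reference \cite{hobson1927theory}. Your route --- the open superlevel/sublevel-set characterizations for (a), (e), (f), the $\varepsilon$-neighborhood argument for (c) with (b) and (d) as special cases, and super/subadditivity of $\liminf$ and $\limsup$ for (g) --- is standard real analysis and each step goes through (for (g) there is no $\infty-\infty$ issue since all functions in play are real-valued). Your flag on item (f) is a genuine, if harmless, erratum in the paper's statement: with $\psi$ merely continuous the claim is false, as $\psi(x)=-x$ converts an LSC function into a USC one, so one must additionally assume $\psi$ is nondecreasing; under that hypothesis your identity $\{p : \psi(\varphi(p)) > \alpha\} = \{p : \varphi(p) > s(\alpha)\}$ with $s(\alpha) = \inf\{t : \psi(t) > \alpha\}$ settles it, since $\{t:\psi(t)>\alpha\}$ is an open, upward-closed subset of $\RR$ and hence an interval of the form $(s(\alpha),\infty)$ (or trivial). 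The paper's only use of (f) is in \thref{prop:USC}, where the continuous \emph{increasing} map $\psi(x)=\sqrt{x}$ is applied to the LSC function $\theta$, so the corrected statement suffices for everything downstream. In short: no gap in your argument, and a legitimate minor correction to the statement as printed.
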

That $q$ is LSC follows almost immediately from its definition.
\begin{proposition} \thlabel{prop:LSC}
$q$ is LSC for $p \in [0,1]$.
\end{proposition}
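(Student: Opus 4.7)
The plan is to realize $q$ as a pointwise increasing limit of continuous functions $q_M$ and invoke \thref{fact:SC}(b). For each $M > 0$ I would introduce the event
\[
A_M := \bigcup_{j \geq 1 \,\colon\, x_j \leq M}(0 \la \L_j) \qquad \text{and set} \qquad q_M(p) := \P(A_M).
\]
Since $A_M$ is monotone increasing in $M$ with $\bigcup_{M > 0} A_M = \{0 \la \L\}$, monotone convergence gives $q_M \uparrow q$ pointwise on $[0,1]$.

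The key technical step is a locality claim: $A_M$ is measurable with respect to the (almost surely finite) set of particles started in $(0, M]$. A left arrow $\L_j$ with $x_j \leq M$ reaches $0$ at time $x_j$ if it survives at all, and in the one-sided setting no particle started to the right of $x_j$ can interfere. Right arrows and blockades in $(x_j, \infty)$ cannot enter $(0, x_j]$, while any trailing left arrow $\L_k$ with $k > j$ travels at the same speed as $\L_j$ and therefore reaches every point of $(0, x_j]$ strictly later; this prevents $\L_k$ from stealing a collision partner away from $\L_j$. Thus $\L_j$'s fate is determined by the configuration in $(0, x_j] \subseteq (0, M]$.

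Granted this locality, continuity of $q_M$ in $p$ follows by a routine argument. Conditioning on the spacings in $(0, M]$ (which are $p$-independent), $A_M$ involves finitely many sites whose types and cluster sizes are independently assigned with distributions that are polynomial in $p$ with coefficients in $[0,1]$, so bounded convergence yields continuity of $q_M$ on $[0,1]$. By \thref{fact:SC}(b), the pointwise increasing limit $q = \sup_M q_M$ of these continuous functions is LSC on $[0,1]$.

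The main obstacle I expect is justifying the locality claim carefully in the presence of clusters and chain reactions. The same-speed travel-time comparison between $\L_j$ and a trailing $\L_k$ is elementary, but ruling out all indirect effects requires verifying in each case that $\L_j$ reaches any potential collision partner in $(0, x_j]$ strictly before $\L_k$ does, which in turn reduces to the inequality $x_j < x_k$.
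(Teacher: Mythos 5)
Your proposal is correct and is essentially the paper's argument: the paper likewise writes $q$ as the increasing limit of the probabilities of the finite-particle events $Q_n = \{(0\la\L)_{(0,x_n)}\}$, which are continuous (indeed polynomial, after conditioning on spacings) in $p$, and concludes via \thref{fact:SC}\ref{fact:inc}. Your events $A_M$ coincide with the $Q_n$ once the locality/light-cone claim is established, and that claim (influence from $(x_j,\infty)$ reaches any point $u\le x_j$ only after time $x_j-u$) is exactly the justification implicitly behind the paper's assertions that $Q_n\subseteq Q_{n+1}$ and $\P(Q_n)\uparrow q$, so you have if anything made the standard step more explicit.
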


\begin{proof}
The events $Q_n= \{(0 \la \L)_{(0,x_n)}\}$ involve finitely many particles. So, $\P(Q_n)$ are finite degree polynomials in $p$, and thus continuous in $p$. Moreover, $Q_n \subseteq Q_{n+1}$, thus the $\P(Q_n)$ are increasing in $n$. Since $q = \lim_{n\to \infty} \P(Q_n)$, it follows from \thref{fact:SC} \ref{fact:inc} that $q$ is LSC. 
\end{proof}

We next aim to prove that $q$ is USC. This is more difficult and involves an indirect characterization of $\theta = (1-q)^2$ that takes a supremum over functionals of configurations with only finitely many particles. Let $\dot N(j,k)$ be the number of blockades that survive in ballistic annihilation restricted to the particles in $[x_j,x_k]$. Similarly, let $\cev N(j,k)$ and $\vec N(j,k)$ count the number of surviving left and right arrows. Define the random variables that track the difference between the number of surviving blockades and arrows in the process restricted to only the particles in $[x_j, x_k]$:
\begin{align}
    W(j,k) = \dot N(j,k) - \cev N(j,k) - \vec N(j,k) \label{eq:W}.
\end{align}

\begin{lemma} \thlabel{lem:W}
$n^{-1}\E_p[W(1,n)]$ is continuous in $p$ for all $n \geq 1$. 
\end{lemma}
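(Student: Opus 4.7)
The plan is to show that $\E_p[W(1,n)]$ is in fact a polynomial in $p$, from which continuity (and hence continuity of $n^{-1}\E_p[W(1,n)]$) is immediate. The key observation is that the restricted process on $[x_1,x_n]$ depends on $p$ only through the independent types $T_1,\ldots,T_n \in \{\B,\L,\R\}$ assigned to the $n$ sites, where each $T_i$ is $\B$ with probability $p$ and $\L$ or $\R$ each with probability $(1-p)/2$. The remaining randomness — the cluster sizes $X_i \sim \mu$ at blockade sites and the inter-particle spacings — comes from distributions that do not involve $p$.

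Concretely, I would condition on the type vector $\vec T = (T_1,\ldots,T_n)$. For a fixed $\vec t$ with $a = a(\vec t)$ blockade entries, $\P_p(\vec T = \vec t) = p^{a}\bigl((1-p)/2\bigr)^{n-a}$ is a polynomial in $p$. Given $\vec T = \vec t$, the initial configuration and therefore the entire deterministic evolution of the restricted system depend only on the $p$-free sources of randomness, so $\E[W(1,n) \mid \vec T = \vec t]$ is a constant independent of $p$. Summing over the $3^n$ type configurations yields
\begin{equation}
\E_p[W(1,n)] = \sum_{\vec t \in \{\B,\L,\R\}^n} p^{a(\vec t)}\Bigl(\frac{1-p}{2}\Bigr)^{n-a(\vec t)} \E\bigl[W(1,n) \mid \vec T = \vec t\bigr],
\end{equation}
a finite sum of polynomials in $p$ multiplied by $p$-free constants, hence a polynomial in $p$ of degree at most $n$. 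Dividing by $n$ preserves continuity.

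The one point requiring care is the finiteness of the conditional expectations appearing in the sum. Using the bounds $\cev N(1,n), \vec N(1,n) \leq n$ and $\dot N(1,n) \leq \sum_{i : t_i = \B} X_i$, one obtains $\E\bigl[|W(1,n)| \mid \vec T = \vec t\bigr] \leq n + a(\vec t)\,\E[\mu]$, which is finite whenever $\E[\mu] < \infty$ — the regime in which the subsequent upper-semicontinuity argument for $\theta$ is actually needed (since $\var(\mu)=\infty$ already forces $p_c=0$ by \thref{thm:main}). I do not foresee any genuine obstacle: once one unpacks the dependence on $p$, the lemma is essentially a polynomial-interpolation statement.
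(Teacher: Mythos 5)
Your argument is correct and is essentially the paper's own proof, which simply observes that $W(1,n)$ involves finitely many particles so that $\E_p[W(1,n)]$ is a finite-degree polynomial in $p$; your conditioning on the type vector $\vec T$ just makes the polynomial structure explicit. One caution on your finiteness remark: justifying the restriction to $\E[\mu]<\infty$ by appealing to \thref{thm:main} is circular, since the proof of that theorem relies on the continuity chain built from this lemma --- the hypothesis $\E[\mu]<\infty$ is harmless and implicitly in force, but it should be imposed directly rather than derived from the main result.
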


\begin{proof}
The random variables $W(1,n)$ involve only finitely many particles. Thus, $\E_p[W(1,n)]$ is a finite degree polynomial in $p$ and is continuous. 
\end{proof}

\begin{lemma} \thlabel{lem:theta}
$\theta =  \max \left( 0 ,  \sup_{n \geq 1} n^{-1} \E_p [W(1,n)]\right)$ for all $p \in [0,1]$. 
\end{lemma}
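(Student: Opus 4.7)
The plan follows the template in \cite{JL}: combine a superadditivity argument for $a_n := \E_p[W(1,n)]$ with matching bounds that identify the supremum with $\theta$ (or with $0$ when $\theta = 0$). First, I would verify superadditivity by coupling the finite process on $[x_1, x_{m+n}]$ with the disjoint union of independent finite processes on $[x_1, x_m]$ and $[x_{m+1}, x_{m+n}]$. Within each half the dynamics are identical in the two couplings, and the merged process only adds collisions between particles that exited the internal boundary from each half. Each such additional arrow--arrow collision removes two surviving arrows, changing $W$ by $+2$, and each additional arrow--cluster collision removes a surviving arrow while destroying at most one blockade, changing $W$ by $\geq 0$. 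Hence $W(1,m+n) \geq W(1,m) + W(m+1, m+n)$ pathwise. Taking expectations and using stationarity of the spacings gives $a_{m+n} \geq a_m + a_n$, and Fekete's lemma then yields $\sup_n n^{-1}a_n = \lim_n n^{-1}a_n$.

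Second, I would prove the upper bound $a_n \leq n\theta$ for every $n$ via a per-site decomposition. Write $a_n = \sum_{j=1}^n c_j$ where $c_j$ is the expected contribution of the particle at $x_j$ to $W$. Conditioning on whether $x_j$ is a cluster or arrow, and noting that survival in the finite process on $[x_1, x_n]$ factors through the two independent one-sided non-visit probabilities on $[x_1, x_j]$ and $[x_j, x_n]$, I would bound each such one-sided probability by its infinite-interval value $1 - q$ using monotonicity (adding particles to the sub-interval can only make a visit more likely). Collecting factors gives $c_j \leq \theta$ and hence $a_n \leq n\theta$. Combined with $\theta \geq 0$, this yields $\max(0, \sup_n n^{-1}a_n) \leq \theta$.

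When $\theta > 0$, the matching lower bound $\lim_n n^{-1}a_n \geq \theta$ would follow from an ergodic bulk comparison: for interior $j$ with $\min(j, n-j) \to \infty$, the finite-process survival events converge to their infinite-process counterparts, so the bulk sites contribute $n\theta - o(n)$ while the boundary contributes $o(n)$. When $\theta = 0$, the upper bound already gives $\sup_n n^{-1}a_n \leq 0$, so $\max(0, \sup) = 0 = \theta$. The main obstacle will be the lower bound in the supercritical regime: cleanly matching the bulk contribution to $\theta$ per site, given the cluster-size distribution and the boundary corrections for surviving arrows, will likely require the Mass Transport Principle of \thref{prop:mtp} to convert local survival events into tractable per-site probabilities, along with a comparison with a biased random walk as alluded to in the proof overview.
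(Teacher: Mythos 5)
Your superadditivity step is sound and coincides with the paper's \thref{step:super} (which defers to \cite[Lemma 15]{benitez2020three}); adding Fekete's lemma is harmless. The genuine gap is in your second part, the claimed per-site bound $c_j \leq \theta$ giving $a_n \leq n\theta$: the monotonicity you invoke runs the wrong way. Since adding particles to a sub-interval only makes visits more likely, the finite-interval non-visit probability satisfies $\P\big((x_j \not\la \L)_{(x_j,x_n]}\big) \geq 1-q$, not $\leq 1-q$; restricting to $[x_1,x_n]$ removes potential annihilators, so blockades near the boundary survive with probability \emph{strictly larger} than in the full-line process, and the inequality $c_j \leq \theta$ is simply false site by site. (A secondary defect: $W$ counts surviving blockades with cluster multiplicity, so even the form of the bound --- a product of two one-sided non-visit probabilities --- does not match $c_j$.) Since your entire derivation of $\max(0,\sup_n n^{-1}a_n)\leq \theta$, including the $\theta=0$ case, rests on this bound, that direction of the lemma is unproved in your plan.

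The paper gets that direction by a completely different mechanism, which you should adopt: if $\E_p[W(1,k)]>\delta k$ for some $k$, then superadditivity makes $W(1,kn)$ dominate the random walk $S_n=\sum_{m=0}^{n-1}W(km+1,k(m+1))$ with positive drift, and the strong law together with the almost-sure limit $n^{-1}W(1,n)\to\theta$ (Steps 2--3 of the paper) forces $\theta\geq k^{-1}\E_p[W(1,k)]>\delta$. In particular $\sup_n n^{-1}a_n>0$ already \emph{implies} $\theta>0$, which is exactly the implication your plan lacks at and below criticality. For the converse inequality $\theta\leq\sup_n n^{-1}a_n$ your ``ergodic bulk comparison'' is in the right spirit, but the concrete ingredients in the paper are: (i) arrows are a.s.\ annihilated \cite{ST}, so $\cev N(1,n)+\vec N(1,n)=o(n)$; (ii) the restricted process has at most two independent geometrically-distributed numbers of extra surviving blockades caused by the boundary, so $n^{-1}\dot N(1,n)\to\theta$; and (iii) Fatou's lemma to pass to expectations. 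The Mass Transport Principle plays no role here.
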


\begin{proof}
The proof has four steps. Fortunately, it requires little modification from the blueprint developed in \cite{JL}. We explain the basic idea of each step and refer the reader to the appropriate reference.

\begin{step} \thlabel{step:super}
For all integers $j<k< \ell$ it holds that $W(j,\ell) \geq W(j,k) + W(k+1, \ell)$.
\end{step}

\begin{proof}
 This superadditivity property is proven in \cite[Lemma 15]{benitez2020three} for a more general variant of ballistic annihilation in which particles sometime survive collisions. The basic idea is that surviving arrows from the restrictions to $[x_j,x_k]$ and $[x_{k+1}, x_\ell]$ have a non-decreasing effect on $W(j,\ell)$. Surviving arrows either destroy other surviving arrows, which augments $W(j,\ell)$. Or, surviving arrows destroy blockades, which may cause a chain reaction, but, regardless, the effect is worst-case neutral on $W(j,\ell)$. The argument does not change if multiple blockades are present at a site. 
\end{proof}

\begin{step} \thlabel{step:arrow}
$\lim_{k \to \infty} k^{-1} {\cev N(1,k)}  = 0 = \lim_{k \to \infty} k^{-1} \vec N(1,k)$. 
%and $\lim_{k \to \infty} k^{-1} \dot N(1,k) = \theta$. 
\end{step}

\begin{proof}
%The equation for the limits involving $\cev N$ and $\vec N$ are
This is proven in \cite[Proposition 12]{JL} for asymmetric ballistic annihilation. It is much simpler to deduce for symmetric systems. The strong law of large numbers gives that the limits equal the probability an arrow is never annihilated. \cite[Lemma 3.3]{ST} observes that this quantity must be zero. Otherwise, ergodicity and symmetry  imply the contradiction that there is a simultaneously a positive density of surviving left and right arrows.  The same reasoning applies with the possibility of multiple blockades at a single site. 
\end{proof}

\begin{step} \thlabel{step:theta}
Let $N_{\mathbb R}(1,k)$ denote the number of blockades that survive in $[x_1,x_k]$ in ballistic annihilation with all particles in $\mathbb R$ present. If $\theta >0$, then 
$$\lim_{k \to \infty} k^{-1} \dot N(1,k) = \theta = \lim_{k \to \infty} k^{-1} N_{\mathbb R}(1,k).$$
\end{step}

\begin{proof}
This is proven in \cite[Proposition 12]{JL}. It follows from the definition of $\theta$ and the strong law of large numbers that $\lim_{k \to \infty} k^{-1} N_{\mathbb R}(1,k) = \theta$. So, it suffices to prove that $$\lim_{k \to \infty} k^{-1} [\dot N(1,k) - \dot N_{\mathbb R}(1,k)] = 0.$$
%They require the additional hypothesis that $\theta>0$ to ensure that all arrows are eventually annihilated. As observed in the previous step, the symmetric setting always has this property. 
First, observe that blockade survival is a decreasing event as the interval of restriction is expanded. So, $\dot N(1,k) - \dot N_{\mathbb R}(1,k) \geq 0$. 
From there, the main idea is that at most a geometric random variable with parameter $q$, call it $R_k$, of the surviving blockades in ballistic annihilation restricted to $[x_1, x_k]$ are removed from right arrows entering at $x_1$, and the same for an independent and identically distributed geometric random variable of left arrows entering at $x_k$, call it $L_k$. So, $\dot N(1,k) - N_{\mathbb R}(1,k) \leq L_k + R_k$ Since these random variables have exponential tails, it is easy to infer from the Borel-Cantelli lemma that $\lim_{k \to \infty} k^{-1} [R_k+L_k] = 0$ almost surely. 
\end{proof}

\begin{step} 
Let $\theta_0 := \max\left(0, \sup_{k \geq 1} k^{-1} \E_p[ W(1,k) ] \right)$. It holds that $\theta = \theta_0$. 
\end{step}

\begin{proof}
The proof is similar to \cite[Lemma 10]{JL}. First, we prove that $\theta \leq \theta_0$. Combining \thref{step:arrow}, \thref{step:theta}, and Fatou's lemma gives
$$\theta = \lim_{k \to \infty} k^{-1} W(1,k) = \E_p \left[ \liminf_{k \to \infty} k^{-1} W(1,k)  \right] \leq \liminf_{k \to \infty} k^{-1} \E_p[W(1,k) ] \leq \theta_0.$$

Next, we show that $\theta \geq \theta_0$. This is immediate when $\theta_0=0$, so suppose that $\theta_0 >0$. Then, there is an integer $k$ with $\E_p[W(1,k)]>0$. Letting $K_m = km$ for $m \geq 0$, we see that $S_n := \sum_{m=0}^{n-1} W(K_m+1,K_{m+1})$ is a random walk with positive drift. The law of large numbers gives that $S_n > 0$ for all $n \geq 1$ with positive probability. \thref{step:super} implies that 
\begin{align}
    W(1,K_n) \geq S_n \qquad \forall n \geq 1. \label{eq:Sn}
\end{align}
This is enough to deduce that $0$ is never visited with positive probability, which gives $\theta >0$. See the proof of \cite[Lemma 10]{JL} for more details.

Now that we have $\theta >0$, it suffices to prove that $\theta > \delta$ for arbitrary $\delta \in (0,1)$ with $\delta < \theta_0$. Let $k\geq 1$ be such that $k^{-1} \E_p[W(1,k)] > \delta$.  \thref{step:arrow} and \thref{step:theta} imply that $\theta = \lim_{n \to \infty} \f 1 n W(1,n).$
Multiplying by $n/n$, applying \eqref{eq:Sn} and then the strong law of large numbers gives 
$$\theta = \liminf_{n \to \infty} \f{n}{K_n} \f 1 n W(1,K_n) \geq \liminf_{n \to \infty} \f n {K_n} \f 1 n S_n = k^{-1} \E_p [W(1,k)] > \delta$$
as desired. 
\end{proof}

\end{proof}

\begin{proposition} \thlabel{prop:USC}
$q$ is USC for $p \in [0,1]$.
\end{proposition}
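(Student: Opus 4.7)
The plan is to reduce the upper semi-continuity of $q$ to the lower semi-continuity of $\theta$ via the identity $\theta = (1-q)^2$, equivalently $q = 1 - \sqrt{\theta}$ (valid because $q \in [0,1]$ and $\theta \in [0,1]$). Once this reduction is in place, \thref{lem:theta} expresses $\theta$ as a supremum of quantities that \thref{lem:W} tells us are already continuous, and the desired semi-continuity will follow from the closure properties collected in \thref{fact:SC}.

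Concretely, the first step is to invoke \thref{lem:theta} to write
\begin{align}
\theta(p) = \max\Bigl(0,\ \sup_{n \geq 1} n^{-1} \E_p[W(1,n)] \Bigr).
\end{align}
Each map $p \mapsto n^{-1}\E_p[W(1,n)]$ is continuous by \thref{lem:W} (it is a polynomial in $p$), so in particular LSC. Applying \thref{fact:SC}\ref{fact:sup} to the family $\{n^{-1}\E_p[W(1,n)]\}_{n \geq 1}$ shows that the supremum is LSC in $p$. Since the constant function $0$ is continuous, hence LSC, \thref{fact:SC}\ref{fact:max} then yields that $\theta$ itself is LSC on $[0,1]$.

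The second step is to transfer this to $q$. The square-root function $\psi(x) = \sqrt{x}$ is continuous on $[0,1]$, so \thref{fact:SC}\ref{fact:comp} gives that $\sqrt{\theta}$ is LSC. Then \thref{fact:SC}\ref{fact:neg} shows $-\sqrt{\theta}$ is USC, and adding the constant $1$ (which is trivially USC) preserves USC by \thref{fact:SC}\ref{fact:sum}. Thus $q = 1 - \sqrt{\theta}$ is USC on $[0,1]$, as claimed.

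There is no real obstacle here beyond bookkeeping: all the genuine work has been carried out in \thref{lem:theta}, whose proof required the superadditivity of \thref{step:super} and the random-walk comparison in \eqref{eq:Sn}. The only subtlety worth flagging is that the reduction $q = 1-\sqrt{\theta}$ relies on the one-sided restriction $\theta = \P((\R \not\ra 0)_{(-\infty,0)} \wedge (0 \not\la \L)_{(0,\infty)}) = (1-q)^2$ from \eqref{eq:theta}, which uses independence of the two half-lines and the symmetry built into the model; this is what allows LSC of $\theta$ to pass cleanly to USC of $q$ via a continuous, monotone transformation.
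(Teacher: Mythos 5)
Your proposal is correct and follows essentially the same route as the paper: deduce that $\theta$ is LSC from \thref{lem:W}, \thref{lem:theta}, and \thref{fact:SC}\ref{fact:sup} and \ref{fact:max}, then transfer to $q = 1-\sqrt{\theta}$ using \thref{fact:SC}\ref{fact:comp}, \ref{fact:neg}, and \ref{fact:sum}. The only difference is expository detail; the logical content is identical.
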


\begin{proof}
It follows that $\theta$ is LSC from Lemmas \ref{lem:W} and \ref{lem:theta} along with \thref{fact:SC} \ref{fact:sup} and \ref{fact:max}. Since $\theta = (1-q)^2$, we have $q = 1- \sqrt{ \theta}.$ \thref{fact:SC} \ref{fact:neg} and \ref{fact:comp} imply that $- \sqrt{\theta}$ is USC. Since $1$ is USC, $q$ can be expressed as the sum of two USC functions and by \thref{fact:SC} \ref{fact:sum} is USC.
\end{proof}

\begin{theorem} \thlabel{thm:cont}
$q$ is continuous for $p \in [0,1]$.
\end{theorem}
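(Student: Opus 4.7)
The plan is to obtain Theorem~\ref{thm:cont} as an immediate consequence of the two preceding propositions. Specifically, Proposition~\ref{prop:LSC} shows that $q$ is LSC on $[0,1]$ and Proposition~\ref{prop:USC} shows that $q$ is USC on $[0,1]$. Applying Fact~\ref{fact:SC}\ref{fact:cont}, which asserts that a real-valued function on $[0,1]$ is continuous precisely when it is simultaneously USC and LSC, then yields continuity of $q$ on all of $[0,1]$. No case analysis or approximation is required at this stage.

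Since all the genuine work has already been carried out in establishing the two semi-continuity propositions, there is no obstacle remaining in the proof of the theorem itself; the main obstacle was instead the USC direction (Proposition~\ref{prop:USC}), absorbed into the preceding section. To summarize where that difficulty lives: the LSC direction is essentially tautological, coming from writing $q$ as the increasing limit of the finite-particle polynomials $\P(Q_n)$ via Fact~\ref{fact:SC}\ref{fact:inc}. The USC direction is substantive, relying on the supremum characterization $\theta = \max\bigl(0, \sup_n n^{-1}\E_p[W(1,n)]\bigr)$ from Lemma~\ref{lem:theta}, the continuity in $p$ of each $n^{-1}\E_p[W(1,n)]$ from Lemma~\ref{lem:W}, and then transferring LSC of $\theta$ to USC of $q = 1 - \sqrt{\theta}$ via Fact~\ref{fact:SC}\ref{fact:neg}, \ref{fact:comp}, and \ref{fact:sum}. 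With both pieces in hand, the theorem follows by a single invocation of the semi-continuity dichotomy.
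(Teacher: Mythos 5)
Your proposal is correct and matches the paper's proof exactly: the theorem is deduced immediately from Propositions~\ref{prop:LSC} and \ref{prop:USC} together with Fact~\ref{fact:SC}\ref{fact:cont}. Your additional summary of where the real work lies (the USC direction via Lemma~\ref{lem:theta}) is accurate but not part of the proof itself.
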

\begin{proof}
This follows immediately from Propositions \ref{prop:LSC} and \ref{prop:USC} along with \thref{fact:SC} \ref{fact:cont}.
\end{proof}

\section{Proof of \thref{thm:main}} \label{sec:proof}

\begin{proof}
%In what follows we will sometimes write $q(p)$ to emphasize that $q$ is a function of $p$. 
Subtracting $q$ from both sides of \eqref{eq:qrec1} in \thref{prop:qrec} gives $0 = g(p,q)$ with
$g\colon [0,1]^2 \to \mathbb R$ defined as
\begin{align}
g(u,v):= \frac{u \left(1-v^2\right) v^2 f'(v)+2 u v f(v)-u v^2+u+v^2-2 v+1}{2 (1-v)}\label{eq:g}.
\end{align}
Let $h(u,v) = g(u,v)/(1-v)$ so that \thref{prop:qrec} implies 
\begin{align}
0 = (1-q) h(p,q).\label{eq:h}
\end{align}
The goal is to show that $(p,q)$ solves $1-v = 0$ for $p \leq p_c$ and transitions to solving $h(u,v) = 0$ for $p \geq p_c$ as depicted in Figure~\ref{fig:qgeo}.

Inspecting \eqref{eq:g}, we see that $h(u,v)$ is linear in $u$. Solving $h(u,v)=0$ yields
\begin{align}
u=\frac{(1-v)^2}{\left(1-v^2\right) v^2 f'(v)-2 v f(v)+v^2+1} =: F(v).\label{eq:F}
\end{align}
Thus, 
\begin{fact} \thlabel{fact:!}
If $h(u,v)=0$, then $u = F(v)$.
% For each $v \in [0,1]$ there is at most one $u$, in particular $u=F(v)$ when it exists, solving $h(u,v) =0$. 
\end{fact}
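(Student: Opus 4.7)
The plan is a straightforward algebraic reduction, exploiting the observation that $g(u,v)$ as given in \eqref{eq:g} is manifestly a linear polynomial in $u$. Collecting the four $u$-terms in the numerator of \eqref{eq:g} and grouping the remaining constant-in-$u$ terms gives
\[
2(1-v)\,g(u,v) \;=\; A(v)\,u \,+\, (1-v)^2,
\]
for a function $A(v)$ in $v$, $f(v)$, $f'(v)$ read off directly from \eqref{eq:g}. Dividing by $(1-v)$ yields $2(1-v)^2 h(u,v) = A(v)u + (1-v)^2$, so $h(u,v)$ is also linear in $u$.

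Now suppose $h(u,v) = 0$. In the intended application of \thref{fact:!} within the proof of \thref{thm:main} one has $v = q < 1$, so $(1-v)^2 \neq 0$, and the displayed identity above reduces to the genuinely linear equation $A(v)u + (1-v)^2 = 0$. Provided $A(v)$ does not vanish this forces
\[
u \;=\; -\frac{(1-v)^2}{A(v)},
\]
and it remains only to check that this ratio equals $F(v)$ as given in \eqref{eq:F}. That final step is a purely mechanical simplification that matches numerator and denominator term by term.

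The main conceptual content of the fact is thus the observation that $g$ is linear in $u$, which is transparent from \eqref{eq:g}; the rest is rote algebra. The only bookkeeping obstacle is ensuring $A(v)\neq 0$ on the relevant range $v \in [0,1)$ so that the linear equation is uniquely solvable; this is easy to verify directly from the explicit form of $A(v)$ using that $f(v)\leq 1$ and $f'(v)\geq 0$ for $v\in[0,1)$. Everything else will be applied as a black box in the subsequent arguments of Section~\ref{sec:proof}.
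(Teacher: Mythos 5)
Your proof is correct and is precisely the paper's argument: the authors likewise just observe that $h(u,v)$ is linear in $u$ and solve $h(u,v)=0$ for $u$ to obtain \eqref{eq:F}. Two minor notes: the nonvanishing of $A(v)$ need not be checked in advance, since $h(u,v)=0$ with $v<1$ already forces $A(v)\,u=-(1-v)^2\neq 0$; and if you actually carry out the promised term-by-term match you will find that the sign of the $f'$ term in \eqref{eq:g}--\eqref{eq:F} as printed is inconsistent with the recursion \eqref{eq:qrec1} (for instance, with $f(t)=t$ the recursion yields $p=1/(1+q)^2$ and hence $\lim_{v\to 1}F(v)=1/4$, whereas the printed $F$ gives limit $0$), so the mismatch you would encounter there is a sign typo in the displayed formulas, not a gap in your argument.
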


Using L'Hospital's rule twice and basic generating function properties %---$f(1) =1$, $f'(1) = \E[X]$, and $f''(1) = \E[X(X-1)]$---
$$\lim_{v \to 1} F(v)= \frac{1}{f(1) + 3f'(1) + f''(1)} =\frac{1}{(1+\E[X])^2+\var(X)}=:p_*.$$
By \thref{fact:!}, 
\begin{fact}\thlabel{fact:!!}
$(u,v)=(p_*,1)$ is the unique solution to $1-v=0=h(u,v)$.
\end{fact}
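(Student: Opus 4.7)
The plan is to handle the two conditions in sequence. The equation $1-v = 0$ pins $v = 1$, so the task reduces to identifying all $u \in [0,1]$ for which $(u,1)$ qualifies as a zero of $h$. Since $h(u,v) = g(u,v)/(1-v)$ is not literally defined at $v=1$, I will interpret the relation as the continuous extension $h(u,1) := \lim_{v \to 1} h(u,v)$, which is the natural reading given the role this fact plays later, where the limit must match the curve $q = q(p)$ as $p \downarrow p_c$.

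The main step will be to invoke \thref{fact:!}: every genuine zero of $h$ in $\{v \neq 1\}$ lies on the graph of $F$, so any candidate limit point at $v = 1$ must be a subsequential limit along this graph. The L'Hospital computation displayed just above the statement shows that $F$ extends continuously to $v = 1$ with single value $p_*$. Hence the only candidate with $v = 1$ is $u = p_*$, which establishes uniqueness; and since $(F(v), v)$ is a curve of exact zeros converging to $(p_*, 1)$, the limiting equation $h(p_*, 1) = 0$ holds by continuity.

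As a sanity check I would also carry out a direct verification: Taylor-expand the numerator of $g(u, v)$ in $v$ around $v = 1$, using $f(1)=1$, $f'(1) = \E[\mu]$, and $f''(1) = \var(\mu) + \E[\mu]^2 - \E[\mu]$. The numerator turns out to vanish to second order in $(1-v)$, so $h(u, v)$ extends to an affine function of $u$ at $v = 1$ whose unique root is $u = p_*$. The only real obstacle is notational — pinning down what ``$h(u,1) = 0$'' means given the $1/(1-v)$ factor — and once this convention is fixed, the statement is essentially a one-line consequence of \thref{fact:!} and the preceding limit.
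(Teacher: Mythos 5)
Your proposal is correct and follows the same route as the paper, whose entire proof of this fact consists of the L'Hospital computation $\lim_{v\to 1}F(v)=p_*$ followed by the appeal to \thref{fact:!} --- exactly your main step --- with your explicit treatment of the removable singularity of $h$ at $v=1$ only making the argument more honest than the paper's. One caution: for \emph{uniqueness} the ``subsequential limit along the graph of $F$'' phrasing does not by itself suffice, since a zero of the continuous extension at $v=1$ need not be approached by zeros with $v<1$; it is really your Taylor-expansion check (the numerator vanishing to second order in $1-v$, so that $h(u,1)$ is affine in $u$ with unique root $p_*$) that closes the argument, a point the paper elides entirely.
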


Since $q$ is continuous (\thref{thm:cont}) with $q(1) = 0$, it follows from \eqref{eq:h} and \thref{fact:!!} that $(p_*,1)$ is the only point at which $(p,q)$ can continuously transition from solving $1-v =0$ to solving $h(u,v) = 0$. So,

\begin{fact} \thlabel{fact:h}
If $p \geq p_*$, then $h(p,q(p))=0$.
\end{fact}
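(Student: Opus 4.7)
The plan is to use the dichotomy from \eqref{eq:h}---either $q(p)=1$ or $h(p,q(p))=0$---together with the continuity of $q$ from \thref{thm:cont} and the uniqueness of $(p_*,1)$ as a simultaneous zero of $1-v$ and $h$ recorded in \thref{fact:!!}. The core idea is that once $p$ exceeds $p_*$, the flat branch $q\equiv 1$ becomes incompatible with continuity, forcing $q(p)$ to track the implicit curve $h(p,\cdot)=0$.

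First I would note that $q(1)=0$: at $p=1$ the positive half-line contains only blockade clusters and no arrows, so $\{0\la\L\}$ is vacuous. Define the closed set $V:=\{p\in[0,1] : q(p)=1\}$; since $1\notin V$, some neighborhood of $1$ in $[0,1]$ avoids $V$. Assume for contradiction that $V\cap(p_*,1]\neq\emptyset$, pick $p_0$ in this intersection, and set $p_1:=\sup(V\cap[p_0,1])$. Closedness of $V$ gives $p_1\in V$ and hence $p_1<1$. For every $p\in(p_1,1]$ we have $q(p)<1$, so \eqref{eq:h} and \thref{fact:!} force $p=F(q(p))$. Letting $p\downarrow p_1$, continuity of $q$ yields $q(p)\to 1$, and the L'Hospital calculation used to introduce $p_*$ gives $F(q(p))\to p_*$. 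Hence $p_1=p_*$, contradicting $p_1\geq p_0>p_*$. Therefore $V\cap(p_*,1]=\emptyset$, which combined with \eqref{eq:h} yields $h(p,q(p))=0$ for every $p\in(p_*,1]$.

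The endpoint $p=p_*$ splits into two cases: if $q(p_*)<1$ then \eqref{eq:h} directly gives $h(p_*,q(p_*))=0$; otherwise $q(p_*)=1$ and $h(p_*,1)=0$ by \thref{fact:!!}. The only real obstacle is the limit $F(q(p))\to p_*$ as $q(p)\to 1^-$, which requires $F$ to extend continuously to $v=1$ with value $p_*$. Since this extension is exactly the L'Hospital computation already executed to introduce $p_*$, no new technical input is needed. The argument is otherwise quite soft---it converts the continuity information from Section~\ref{sec:cont} into the desired algebraic statement without invoking any monotonicity of $q$, which is a delicate property the paper carefully avoids assuming.
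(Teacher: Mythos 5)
Your argument is correct and is essentially the paper's own: the paper also deduces \thref{fact:h} from the dichotomy in \eqref{eq:h}, the continuity of $q$ with $q(1)=0$, and the fact that $(p_*,1)$ is the only point where the branch $1-v=0$ can meet the branch $h(u,v)=0$. Your supremum argument on the set $V=\{p: q(p)=1\}$ is simply a more explicit rendering of the paper's one-sentence ``unique transition point'' claim, and it inherits the same (harmless, shared) informalities about extending $F$ continuously to $v=1$.
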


Combining \thref{fact:!} and \thref{fact:h} gives
\begin{fact} \thlabel{fact:F}
$p=F(q(p))$ for $p \geq p_*$.
\end{fact}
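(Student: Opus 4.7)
The plan is to prove Fact F by direct substitution, combining the two facts just proved. Fact h states that $h(p, q(p)) = 0$ for every $p \geq p_*$. Fact ! states that any solution $(u, v)$ of $h(u, v) = 0$ must satisfy $u = F(v)$, which follows because $h$ is affine in $u$ (with nonzero slope on the relevant range), as can be read off from the formula \eqref{eq:g} for $g = (1-v) h$. Plugging $(u, v) = (p, q(p))$ into Fact ! and invoking Fact h then yields $p = F(q(p))$ throughout $[p_*, 1]$.

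I do not expect any technical obstacle at this step, since all of the heavy lifting has been done upstream. Fact ! is a one-line algebraic solve. Fact h, the more substantive ingredient, combines the identity $0 = (1 - q) h(p, q)$ from \eqref{eq:h} with the continuity of $q$ (\thref{thm:cont}) and the uniqueness of the transition point $(p_*, 1)$ from Fact !!, so that the pair $(p, q(p))$ cannot oscillate between the branch $\{v = 1\}$ and the branch $\{h(p, v) = 0\}$ and must lie on the latter once $p \geq p_*$.

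Looking forward, I expect Fact F to be used to finish \thref{thm:main} as follows. First, the same continuity-plus-uniqueness argument should give $q(p) = 1$ for $p < p_*$, so that $p_c = p_*$; the explicit formula \eqref{eq:pcform} then follows from the identification $p_* = \lim_{v \to 1} F(v) = 1 / \bigl( f(1) + 3 f'(1) + f''(1) \bigr)$ together with the standard moment identities $f(1) = 1$, $f'(1) = \E[\mu]$, $f''(1) = \E[\mu^2] - \E[\mu]$, which collapse the denominator to $(1 + \E[\mu])^2 + \var(\mu)$. The implicit equation \eqref{eq:qform} is simply a rewriting of $p = F(q(p))$. For strict monotonicity of $q$ on $[p_c, 1]$, I would check that $F$ is strictly monotone in a one-sided neighborhood of $v = 1$ (for instance by computing $F'$ at $v = 1^-$ via L'Hospital), which is the only remaining routine verification.
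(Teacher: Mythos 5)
Your proof is correct and is essentially identical to the paper's: Fact F follows immediately by substituting $(u,v)=(p,q(p))$ into \thref{fact:!}, which applies because \thref{fact:h} guarantees $h(p,q(p))=0$ for $p\geq p_*$. The additional forward-looking remarks are consistent with how the paper proceeds, but the step itself is the same one-line combination of the two preceding facts.
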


\thref{fact:F} says that $F$ is a left inverse of $q$. It is an elementary exercise in analysis that this and continuity of $q$ imply that

\begin{fact} \thlabel{fact:<}
 $q$ is continuous and strictly decreasing for $p \geq p_*$.
\end{fact}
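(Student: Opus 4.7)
The plan is to deduce \thref{fact:<} from two ingredients already in hand: the continuity of $q$ established in \thref{thm:cont}, and the left-inverse identity $p = F(q(p))$ for $p \geq p_*$ supplied by \thref{fact:F}. Continuity is then immediate, so the real content is strict monotonicity. My route is to combine continuity of $q$ with injectivity on $[p_*,1]$, and then pin down the direction of monotonicity using a single endpoint value.

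For injectivity, \thref{fact:F} gives it directly: if $p_* \le p_1 < p_2 \le 1$, then $F(q(p_1)) = p_1 \neq p_2 = F(q(p_2))$, which forces $q(p_1) \neq q(p_2)$. Combined with the continuity of $q$, this yields strict monotonicity on the interval $[p_*,1]$ via the standard topological principle that a continuous injection from a connected interval into $\mathbb R$ is strictly monotone --- if $q$ took values both strictly above and strictly below some intermediate value, the intermediate value theorem would produce two distinct preimages of that value, contradicting injectivity.

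To fix the direction, I would read off $q(1) = 0$ from the definition of $\mu$-clustered BA: when $p = 1$, every site in $(x_n)_{n \in \mathbb Z}$ carries a cluster of blockades and no arrow is ever present, so $\P(0 \la \L) = 0$. Since $q$ is $[0,1]$-valued, if $q$ were strictly increasing on $[p_*,1]$ we would obtain $q(p_*) < q(1) = 0$, which is impossible. Hence $q$ must be strictly decreasing on $[p_*,1]$, completing the proof.

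I do not anticipate a serious obstacle. Every ingredient is either recorded above in the proof of \thref{thm:main} or is a classical piece of elementary analysis. The one mildly delicate step is the ``continuous and injective on an interval implies strictly monotone'' implication, but this is a routine consequence of the intermediate value theorem and requires no new input about ballistic annihilation. The only alternative pitfall would be trying to pin down the value $q(p_*)$ exactly in order to name the direction of monotonicity; using the sign of $q(1)$ instead sidesteps that issue entirely.
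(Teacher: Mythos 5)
Your proposal is correct and follows the same route the paper intends: the paper dismisses this step as an ``elementary exercise'' following from the left-inverse identity of \thref{fact:F} together with continuity of $q$, and you have simply supplied the details (injectivity from the left inverse, strict monotonicity from continuity plus injectivity on an interval, and the direction pinned down by $q(1)=0$ and $q\geq 0$). No gap; the filled-in argument is sound.
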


\noindent \thref{fact:F} and \thref{fact:<} (along with \thref{thm:cont}) imply \eqref{eq:qform} in \thref{thm:main}. 

It remains to prove that $p_c = p_*$ as claimed at \eqref{eq:pcform}. 
Suppose that $p> p_*$.  \thref{fact:h} implies that $h(p,q(p))=0$. \thref{fact:!!} ensures that $q(p_*)=1$. \thref{fact:!} requires that $q(p) \neq 1$. Since $q(p)$ is a probability, we then have $q(p)<1$. So, $p_c \leq p_*$. 

To see the reverse inequality, suppose that there exists $p_0 < p_*$ with $q(p_0)= v<1$. \thref{fact:<} and $q(1)=0$ imply that $q\colon [p_*,1] \to [0,1]$ is a continuous bijection. Thus, there is $p_1 > p_*$ with $q(p_1)=v$. As $v<1$, \eqref{eq:h} implies that $h(p_0,v) = 0 = h(p_1,v)$. This contradicts \thref{fact:!}, which requires that $p_0=p_1 = F(v)$. So, $q=1$ for all $p \geq p_*$. Thus, $p_c \geq p_*$. 
\end{proof}

\bibliographystyle{amsalpha}
\bibliography{BA.bib}

\newcommand{\etalchar}[1]{$^{#1}$}
\providecommand{\bysame}{\leavevmode\hbox to3em{\hrulefill}\thinspace}
\providecommand{\MR}{\relax\ifhmode\unskip\space\fi MR }
% \MRhref is called by the amsart/book/proc definition of \MR.
\providecommand{\MRhref}[2]{%
  \href{http://www.ams.org/mathscinet-getitem?mr=#1}{#2}
}
\providecommand{\href}[2]{#2}
\begin{thebibliography}{SVdBL91}

\bibitem[ACCH22]{aldous2022parking}
David Aldous, Alice Contat, Nicolas Curien, and Olivier H{\'e}nard,
  \emph{Parking on the infinite binary tree}, arXiv:2205.15932 (2022).

\bibitem[AN04]{athreya2004branching}
Krishna~B Athreya and Peter~E Ney, \emph{Branching processes}, Courier
  Corporation, 2004.

\bibitem[BBJ21]{bahl2019parking}
Riti Bahl, Philip Barnet, and Matthew Junge, \emph{Parking on supercritical
  {G}alton-{W}atson trees}, ALEA \textbf{18} (2021).

\bibitem[BBJJ22]{bahl2021diffusion}
Riti Bahl, Philip Barnet, Tobias Johnson, and Matthew Junge,
  \emph{{Diffusion-limited annihilating systems and the increasing convex
  order}}, Electronic Journal of Probability \textbf{27} (2022), 1 -- 19.

\bibitem[BF95]{belitsky1995ballistic}
Vladimir Belitsky and Pablo~A Ferrari, \emph{Ballistic annihilation and
  deterministic surface growth}, Journal of statistical physics \textbf{80}
  (1995), no.~3, 517--543.

\bibitem[BGJ19]{burdinski}
Debbie Burdinski, Shrey Gupta, and Matthew Junge, \emph{The upper threshold in
  ballistic annihilation}, Latin American Journal of Probability and
  Mathematical Statistics \textbf{16} (2019), 1077.

\bibitem[BJL{\etalchar{+}}20]{benitez2020three}
Luis Benitez, Matthew Junge, Hanbaek Lyu, Maximus Redman, and Lily Reeves,
  \emph{Three-velocity coalescing ballistic annihilation}, arXiv:2010.15855
  (2020).

\bibitem[BL90]{BL3}
Maury Bramson and Joel~L Lebowitz, \emph{Asymptotic behavior of densities in
  diffusion dominated two-particle reactions}, Physica A: Statistical Mechanics
  and its Applications \textbf{168} (1990), no.~1, 88--94.

\bibitem[BL91]{bramson1991asymptotic}
\bysame, \emph{Asymptotic behavior of densities for two-particle annihilating
  random walks}, Journal of statistical physics \textbf{62} (1991), no.~1,
  297--372.

\bibitem[BM20]{bullets2}
Nicolas Broutin and Jean-Fran{\c{c}}ois Marckert, \emph{The combinatorics of
  the colliding bullets}, Random Structures \& Algorithms \textbf{56} (2020),
  no.~2, 401--431.

\bibitem[CEGM83]{collet1983study}
P~Collet, J-P Eckmann, V~Glaser, and A~Martin, \emph{Study of the iterations of
  a mapping associated to a spin glass model}, Les rencontres
  physiciens-math{\'e}maticiens de Strasbourg-RCP25 \textbf{33} (1983),
  117--142.

\bibitem[CH19]{curien2019phase}
Nicolas Curien and Olivier H{\'e}nard, \emph{The phase transition for parking
  on {G}alton--{W}atson trees}, arXiv:1912.06012 (2019).

\bibitem[CRS18]{cabezas2018recurrence}
M.~Cabezas, L.~T. Rolla, and V.~Sidoravicius, \emph{Recurrence and density
  decay for diffusion-limited annihilating systems}, Probability Theory and
  Related Fields \textbf{170} (2018), no.~3, 587--615.

\bibitem[DKJ{\etalchar{+}}19]{bullets}
Brittany Dygert, Christoph Kinzel, Matthew Junge, Annie Raymond, Erik Slivken,
  Jennifer Zhu, et~al., \emph{The bullet problem with discrete speeds},
  Electronic Communications in Probability \textbf{24} (2019).

\bibitem[DRFP95]{droz95}
Michel Droz, Pierre-Antoine Rey, Laurent Frachebourg, and Jarosław Piasecki,
  \emph{Ballistic-annihilation kinetics for a multivelocity one-dimensional
  ideal gas}, Physical Review \textbf{51} (1995), no.~6, 5541--5548 (eng).

\bibitem[EF85]{elskens1985annihilation}
Yves Elskens and Harry~L Frisch, \emph{Annihilation kinetics in the
  one-dimensional ideal gas}, Physical Review A \textbf{31} (1985), no.~6,
  3812.

\bibitem[Gor22]{gorski2022strict}
Christian Gorski, \emph{Strict monotonicity for first passage percolation on
  graphs of polynomial growth and quasi-trees}, arXiv:2208.13922 (2022).

\bibitem[Hob27]{hobson1927theory}
Ernest~William Hobson, \emph{The theory of functions of a real variable and the
  theory of fourier's series}, vol.~1, CUP Archive, 1927.

\bibitem[HST21]{HST}
John Haslegrave, Vladas Sidoravicius, and Laurent Tournier, \emph{Three-speed
  ballistic annihilation: phase transition and universality}, Selecta
  Mathematica \textbf{27} (2021), no.~84.

\bibitem[HT21]{HT}
John Haslegrave and Laurent Tournier, \emph{Combinatorial universality in
  three-speed ballistic annihilation}, In and Out of Equilibrium 3: Celebrating
  Vladas Sidoravicius, vol.~77, Springer International Publishing, 2021,
  pp.~487--517.

\bibitem[JJ18]{johnson2018stochastic}
Tobias Johnson and Matthew Junge, \emph{Stochastic orders and the frog model},
  no.~2, 1013--1030.

\bibitem[JJLS20]{johnson2020particle}
Tobias Johnson, Matthew Junge, Hanbaek Lyu, and David Sivakoff, \emph{Particle
  density in diffusion-limited annihilating systems}, arXiv:2005.06018 (2020).

\bibitem[JL18]{JL}
Matthew Junge and Hanbaek Lyu, \emph{The phase structure of asymmetric
  ballistic annihilation}, arXiv:1811.08378 (2018), To appear in Annals of
  Applied Probability.

\bibitem[JR19]{johnson2019sensitivity}
Tobias Johnson and Leonardo~T Rolla, \emph{Sensitivity of the frog model to
  initial conditions}, Electronic Communications in Probability \textbf{24}
  (2019), 1--9.

\bibitem[KRL95]{krapivsky95}
PL~Krapivsky, S~Redner, and F~Leyvraz, \emph{Ballistic annihilation kinetics:
  The case of discrete velocity distributions}, Physical Review E \textbf{51}
  (1995), no.~5, 3977.

\bibitem[KZ20]{kovchegov2020dynamical}
Yevgeniy Kovchegov and Ilya Zaliapin, \emph{Dynamical pruning of rooted trees
  with applications to 1-d ballistic annihilation}, Journal of Statistical
  Physics \textbf{181} (2020), no.~2, 618--672.

\bibitem[Mar02]{Marchand}
R.~Marchand, \emph{Strict inequalities for the time constant in first passage
  percolation}, Ann. Appl. Probab. \textbf{12} (2002), no.~3, 1001--1038.
  \MR{1925450}

\bibitem[RSZ19]{rolla2019universality}
Leonardo~T Rolla, Vladas Sidoravicius, and Olivier Zindy, \emph{Universality
  and sharpness in activated random walks}, Annales Henri Poincar{\'e},
  vol.~20, Springer, 2019, pp.~1823--1835.

\bibitem[ST17]{ST}
Vladas Sidoravicius and Laurent Tournier, \emph{Note on a one-dimensional
  system of annihilating particles}, Electron. Commun. Probab. \textbf{22}
  (2017), 9 pp.

\bibitem[SVdBL91]{sheu1991coagulation}
Wen-Shyan Sheu, C~Van~den Broeck, and Katja Lindenberg, \emph{Coagulation
  reaction in a one-dimensional gas}, Physical Review A \textbf{43} (1991),
  no.~8, 4401.

\bibitem[TW83]{toussaint1983particle}
Doug Toussaint and Frank Wilczek, \emph{Particle--antiparticle annihilation in
  diffusive motion}, The Journal of Chemical Physics \textbf{78} (1983), no.~5,
  2642--2647.

\bibitem[vdBK93]{BK}
J.~van~den Berg and H.~Kesten, \emph{Inequalities for the time constant in
  first-passage percolation}, Ann. Appl. Probab. \textbf{3} (1993), no.~1,
  56--80. \MR{1202515}

\end{thebibliography}

\end{document}